\documentclass[leqno]{amsart} 
\usepackage[utf8]{inputenc}
\usepackage[foot]{amsaddr}
\usepackage{latexsym}
\usepackage{amssymb}
\usepackage{amsmath}
\usepackage{amsthm}
\usepackage{amsfonts}

\usepackage{soul, color}
\usepackage{dsfont}
\usepackage{verbatim}
\usepackage{graphicx}
\usepackage{graphics}
\usepackage{enumerate}
\usepackage{url}
\usepackage{mathtools}
\usepackage{comment}
\usepackage{bbm}
\usepackage{hyperref}
\usepackage{relsize}
\usepackage{subcaption}
\usepackage{tikz}
\usetikzlibrary{shapes,arrows,positioning,calc}
\allowdisplaybreaks

\usetikzlibrary{decorations.pathmorphing}

\def\n{{\hat{\mathbf{n}}}}
\def\r{{\mathbf{r}}}

\def\bv{{\hat{\mathbf{v}}}}
\def\x{{\mathbf{x}}}
\def\y{{\mathbf{y}}}

\def\Omegastar{\Omega_{\textrm{star}}}
\def\Omegarandom{\Omega_{\textrm{random}}}
\def\Omegacavity{\Omega_{\textrm{cavity}}}
\def\Omegae{\Omega_{\textrm{ellipsoid}}}
\def\Omegat{\Omega_{\textrm{torus}}}
\def\gammastar{\gamma_{\textrm{star}}}
\def\gammarandom{\gamma_{\textrm{random}}}
\def\gammacavity{\gamma_{\textrm{cavity}}}

\def\gammat{\gamma_{\textrm{torus}}}
\def\uex{u_{\textrm{exact}}}

\newtheorem{theorem}{Theorem}[section]
\newtheorem{corollary}{Corollary}[theorem]
\newtheorem{lemma}[theorem]{Lemma}
\theoremstyle{definition}

\theoremstyle{remark}
\newtheorem{remark}{Remark}

\begin{document}
\title{String kernel representations in elastostatics}
\author{Jeremy G. Hoskins$^1$}
\address{$^1$Department of Statistics, University of Chicago, USA and NSF-Simons National Institute for Theory and Mathematics in
Biology, Chicago, IL.}
\email{jeremyhoskins@uchicago.edu}

\author{Alan E. Lindsay$^2$}\address{$^2$Department of Applied and Computational Mathematics and Statistics, University of Notre Dame, Notre Dame, IN, 46556, USA.}
\email{a.lindsay@nd.edu}

\author{Manas Rachh$^3$}
\address{$^3$Department of Mathematics,
Indian Institute of Technology Bombay,
Powai, Mumbai - 400076.}
\email{mrachh@iitb.ac.in}
\thanks{Corresponding author: mrachh@iitb.ac.in}

 \keywords{Elastostatics, boundary integral equations, second kind Fredholm operators, traction boundary value problem}
\subjclass{45B05; 65N80; 74B05}

\begin{abstract}
In this paper we present a new boundary integral equation formulation for the solution of the elastostatic traction boundary value problem in two and three dimensions. The approach relies on the introduction of new layer potentials, called \emph{string kernels}, which are based on modifications of the Boussinesq-Cerruti family of half-space solutions. We prove that the resulting integral equations are second-kind integral equations, and show that they are well-behaved in the incompressible limit. We illustrate the performance of the method with several numerical examples.
\end{abstract}
\maketitle

\section{Introduction}

In this paper we consider the problem of linear elastostatics with traction boundary conditions in two and three dimensions, modeled by the boundary value problem
\begin{align}
    L[u](\x):=(\lambda+\mu) \nabla (\nabla\cdot u(\x)) + \mu \Delta u(\x) = 0, \quad &\x \in \Omega,\label{eqn:elasto}\\
     \n(\x)\cdot \sigma[u](\x) = f(\x), \quad &\x \in \partial \Omega,\label{eqn:bc}
\end{align}
where $\lambda$ and $\mu$ are the Lam\'{e} parameters, $u$ is an unknown displacement field, $\n(\x)$ denotes the outward normal, and the stress $\sigma$ is defined by
$$\sigma_{ij}[u] = \lambda \delta_{ij} \frac{\partial u_k}{\partial x_k} + \mu \left(\frac{\partial u_j}{\partial x_i}+\frac{\partial u_i}{\partial x_j} \right),$$
where $\delta_{ij}$ is the Kronecker delta function and we employ the standard Einstein summation convention. {Throughout, we let
$\alpha = \frac{\lambda + \mu}{\lambda + 2\mu}$
denote the non-dimensional constant characterizing the relative strength of the
Lam\'{e} parameters; in terms of the Poisson ratio $\nu$ it is given by
$\alpha = 1/(2(1-\nu))$, so that $\alpha \in (1/4,1)$ for all physically
admissible $\nu \in (-1,1/2)$, with $\alpha \to 1^{-}$ in the incompressible
limit.} In what follows we will assume that $\partial \Omega$ is infinitely differentiable and compact, though $\Omega$ can be either bounded or unbounded. It is well-known (see \cite{hsiao2008boundary} for example) that for the interior problem the above boundary value problem has a finite-dimensional nullspace corresponding to rigid motions. Existence requires the applied traction to have zero net force and zero net torque. On unbounded domains, one must also impose suitable decay conditions at infinity.

There is a large literature on solving the traction boundary value problem using finite difference and finite element methods, see~\cite{bochev2011energy,zienkiewicz2005finite,arnold1984family,dupont1979family,brenner1992linear,falk1991nonconforming,falk2008finite}, for example. Direct discretization of the PDE, however, carries several well-known difficulties, particularly for complex domains requiring adaptivity, since the condition number of the resulting sparse linear systems grows as $h^{-2}$ under mesh refinement. Additionally, exterior and unbounded problems require artificial domain truncation. These issues are compounded as the material approaches the incompressible limit $\lambda \to \infty$: standard conforming displacement-based finite element methods can exhibit volumetric locking, and the resulting linear systems become increasingly ill-conditioned, with error and conditioning bounds that are not uniform in $\lambda$ \cite{babuskaLockingEffectsFinite1992,brenner2007mathematical,boffi2013mixed}. Recovering uniform behavior requires non-conforming, mixed, or stabilized formulations that satisfy an inf–sup (LBB) condition, as in the incompressible-Stokes case to which elastostatics formally reduces in this limit.

Boundary integral equation (BIE) methods, by contrast, discretize only the boundary of the domain. This leads to a reduction in the dimension of the problem, straightforward treatment of exterior problems and unbounded domains, and the treatment of complex geometries without volumetric meshing. Moreover, for many PDEs and for suitably chosen ansatzes, this reduction to a BIE produces second-kind integral equations whose condition number remains bounded independent of discretization size \cite{kress_2014}. A further practical advantage of the BIE framework is that derived quantities such as stress are computed by applying the relevant differential operator to the kernel, and retain the full convergence rate of the boundary discretization at any point bounded away from $\partial \Omega.$ With appropriate near-singular quadrature or adaptive integration, these quantities can frequently be accurately computed up to and on $\partial \Omega$ itself.

In two dimensions, the standard integral-equation-based approach for solving the traction boundary value problem is to use the Sherman-Lauricella representation~\cite{lauricella,sherman}. This approach has subsequently been extended to traction boundary value problems on multiply connected domains in~\cite{greengard1996integral,helsing2000interior}. For a detailed discussion and related problems in elasticity, see~\cite{parton1982integral,mikhlin2014integral,muskhelishvili1953some,jaswon1978integral,7a5ceb47-1563-3f2e-8d35-1a46dd4eba65}. However, the Sherman-Lauricella representation does not extend naturally to three dimensions, as the method relies on enforcing indefinite integrals of the surface traction, rather than the value of the surface traction itself.

 Another class of integral-equation-based approaches involves \emph{direct representations}, which are based on the Somigliana identity (the elastostatic analog of Green's identity). This approach tends to result in integral equations involving singular or hypersingular integral operators in two and three dimensions~\cite{bonnet,hsiao2008boundary,vogel1973integral,chen1999review2}. Alternatively, indirect representations seek to represent the solution to the PDE as integrals of unknown densities on the boundary $\partial \Omega$ against kernels constructed from linear combinations of the free-space elastostatics Green's function and its derivatives. This choice also results in a singular integral operator upon imposing the boundary conditions~\cite{vogel1973integral,hong1988derivations}.
 
 In both the typical indirect and direct representations, the resulting integral operator is Fredholm of index zero but is not a compact perturbation of the identity, in contrast to the Laplace Neumann problem, where the single-layer ansatz produces the classical second-kind equation $\frac{1}{2}I + K^*.$ Consequently, the standard machinery for second-kind equations is not directly applicable. Nevertheless, a variety of quadrature methods have been developed to address singular and hypersingular integral operators, see~\cite{gimbutas2012calculation,andra1998integration,schwab1992numerical,karaiev2020singular,yao2024robust}, for example.

Our approach represents the displacement field using a layer potential on $\partial \Omega$ whose kernel is built from modifications of the Boussinesq-Cerruti family of half-space fundamental solutions~\cite{boussinesq1885application,cerruti1882ricerche,love2013treatise}, i.e., the Green's functions for a point load acting in a half-space with zero traction boundary conditions. Since the Boussinesq–Cerruti solutions already satisfy the zero-traction condition on a flat half-space boundary, they are an appealing candidate kernel for traction boundary value problems on general domains. They do, however, carry a non-local singularity: in addition to the expected singularity at the source, each kernel is singular along a half-line emanating from the source in the direction complementary to the half-space. This geometric obstruction rules out their direct use as layer-potential kernels on non-convex domains and for exterior problems, where such half-lines generically intersect the solution domain.

A key observation is that each Boussinesq–Cerruti kernel admits a representation as a line integral of certain derivatives of the free-space elastostatic fundamental solution along this half-line. Truncating the integration to a finite segment produces what we call a \emph{string kernel}: near the source the singularity structure inherited from the free-space kernel is preserved, while the extended singularity is now confined to a bounded segment which, by appropriate choice of string orientation and length, can be made to lie entirely outside $\overline{\Omega}$
 regardless of the domain geometry. The full Boussinesq–Cerruti representation is recovered as the string length tends to infinity. Using a string kernel as the integrand of a layer potential with a density supported on $\partial \Omega$ gives a second-kind boundary integral equation that is well-behaved in the limit $\lambda \to \infty,$ $\mu$ fixed.

A distinct, though conceptually related, class of methods is the method of fundamental solutions (MFS)~\cite{fairweather1998method,cao1991three,christiansen1976kupradze,keshavarzi1978modified,kupradze1967potential}, in which finite collections of point sources, called \emph{image charges}, are placed outside of the domain. The strengths of the image charges are inferred from symmetry, precomputed by a local auxiliary solve, or included as additional unknowns in the global linear system. As such, though effective in practice, the result does not typically correspond to a discretization of a second-kind integral equation, and hence comes with fewer analytic guarantees on conditioning and convergence. {For several combinations of geometry, PDE, and boundary conditions, exact image systems are available in closed form which involve continuous line sources. For the exterior of a sphere, the Neumann Green's function of the Laplace operator consists of two point images together with a uniform line image on the segment joining the center to an appropriate image point~\cite{neumann1883,dassios2012}; the image system for a sphere in Stokes flow likewise contains continuous distributions of Stokeslets, rotlets, stresslets and potential dipoles along that segment~\cite{microhydrodynamics}. Numerically, \cite{BROMS2025113636} exploits this structure, supplementing MFS sources on interior proxy surfaces with singularities discretized along line segments joining particle centers, in order to resolve interactions between nearly touching spheres. For arbitrary smooth geometries string kernel representations use a continuum of image charges confined to line segments, with relative strengths along each string are known a priori, requiring neither an auxiliary solve nor additional unknowns.}

The contributions of this paper are as follows. We introduce a family of \emph{string kernel} layer-potential representations for the traction boundary value problem of linear elastostatics in two and three dimensions, derived by truncating the line-integral representation of the Boussinesq–Cerruti half-space fundamental solutions. We prove that, for smooth $\partial \Omega$ and an appropriate choice of string orientation and length, the resulting boundary integral operator is a compact perturbation of a multiple of the identity, and hence that the associated boundary integral equation is of second kind. Naturally, the condition number of the representations deteriorates when the string length goes to zero. In two dimensions we show that the strings can be \emph{bent}, allowing for longer strings and almost entirely avoiding this issue. Furthermore, we show that the condition number is independent of $\lambda.$ Indeed, the integral equation itself is independent of $\lambda.$ In three dimensions, we present numerical experiments showing the behavior of the  conditioning of our representation as the string length goes to zero. Additionally, under certain assumptions, we show that the condition number of our integral equation is bounded as $\lambda\to \infty$ with $\mu$ held fixed, and in this limit reduces to a compact perturbation of a standard second-kind integral equation for the Stokes traction problem. 

The remainder of the paper is organized as follows. In Section \ref{sec:bous} we review the Boussinesq solutions and the properties of layer potentials generated using them. Next, in Section \ref{sec:string} we introduce string kernel representations and use them to derive our second-kind integral equation for elastostatics. In Section \ref{sec:num_app} we describe numerical considerations required when using string kernels, including stable evaluation and choice of string length. Following this, in Section \ref{sec:num_ill}, we illustrate the application of our approach to 2D and 3D examples.

\section{Boussinesq-Cerruti solutions in two and three dimensions}\label{sec:bous}

\subsection{Notation, definitions, and basic properties}
Our boundary integral representations for solving (\ref{eqn:elasto}-\ref{eqn:bc}) are based on suitable modifications of the fundamental solution of half-space elastostatics problems. In particular, we take as our starting point the Boussinesq-Cerruti solutions \cite{boussinesq1885application} in two and three dimensions for point loads on a semi-infinite isotropic elastic medium. We refer the reader to \cite{landau1959theory}, for example, for further discussion and derivations.

In two dimensions, given a source $\y = (y_1,y_2)^T,$ a target $\x = (x_1,x_2)^T,$ and a unit direction vector $\bv=(v_1,v_2)^T,$ we define the matrix-valued function $G^{B,2}$ by
\begin{align}
    G^{B,2}(\x,\y;\bv) &= 2G^{S,2}(\x,\y) +\frac{1}{2\pi \mu}\frac{1-\alpha}{\alpha}\left[{-\log(r){\rm I}_2}+ {\rm arg}(\bv^\perp\cdot \r,-\bv\cdot \r) \begin{pmatrix} \phantom{-}0 & 1 \\ -1 & 0 \end{pmatrix}\right]\nonumber
\end{align}
where $\r = \x - \y,$ $r = |\r|,$ $\bv^\perp = (v_2,-v_1),$ ${\rm arg}(u,v)$ denotes the argument of $v+iu$ with branch cut along the negative real axis, and $G^{S,2}$ is the 2D Stokeslet~\cite{microhydrodynamics,pozrikidis1992boundary}
\begin{align}
    G^{S,2}(\x,\y) = \frac{1}{{4}\pi \mu}\left[ -\log(r) {\rm I}_2 + \frac{\r \r^T}{r^2}\right].
\end{align}
Here ${\rm I}_d$ denotes the $d \times d$ identity matrix.

A straightforward calculation shows that 
$$L[G^{B,2}(\cdot,\y;\bv)](\x) = 0,$$
for all $\x \in \mathbb{R}^{2} \setminus \mathcal{L}_{\y,\bv}$, where $\mathcal{L}_{\y,\bv} = \{ \x \, | \, \x = \y + t \bv, \, t\in\mathbb{R}^{+} \}$, i.e., for all $\x$ not on the half-line starting at $\y$ and going to infinity in the $\bv$ direction. Moreover, again excluding this line, 
\begin{align}\label{eqn:sigg2}
\sigma[G^{B,2}(\cdot,\y;\bv)]_{ijk}(\x) = -2\frac{r_ir_jr_k}{\pi r^4}=2\, {T^{S,2}_{ijk}(\x,\y),}
\end{align}
{where $T^{S,2}$ denotes the traction kernel of the Stokes single-layer potential.}

Similarly, in three dimensions given a source $\y = (y_1,y_2,y_3)^T,$ a target $\x = (x_1,x_2,x_3)^T,$ and a unit direction vector $\bv=(v_1,v_2,v_3)^T,$ we define the matrix-valued function $G^{B,3}$ by
\begin{align}
    G^{B,3}(\x,\y;\bv) &= 2 G^{S,3}(\x,\y)  \nonumber\\
    &\quad + \frac{1}{4 \pi \mu} \frac{1-\alpha}{\alpha} \left[\frac{1}{R}{\rm I}_3 + \frac{\r \bv^T - \bv \r^T- (\r\cdot \bv) \bv \bv^T}{rR} - \frac{Q_\bv\r\r^TQ_\bv}{rR^2}\right],\nonumber
\end{align}
where $\r = \x - \y,$ $r = |\r|,$ $Q_\bv = {\rm I}_3 - \bv\bv^T,$ $R = r-\r \cdot \bv,$ where $G^{S,3}$ denotes the 3D Stokeslet~\cite{graham2018microhydrodynamics,BROMS2025113636,TORNBERG20081613}
\begin{align}
    G^{S,3}(\x,\y) = \frac{1}{{8} \pi \mu} \left[ \frac{{\rm I}_3}{r} + \frac{\r \r^T}{r^3} \right].
\end{align}

Again, a straightforward but tedious calculation shows that 
$$L[G^{B,3}(\cdot,\y;\bv)](\x) = 0,$$
at all points $\x$ not on the half-line $\mathcal{L}_{\y,\bv}$, and 
\begin{align*}
\sigma[G^{B,3}(\cdot,\y;\bv)]_{ijk}(\x) &= {2T^{S,3}_{ijk}(\cdot,{\bf y})}+\frac{1}{4\pi }\frac{1-\alpha}{\alpha}\left\{\left(\frac{4}{R} + \frac{2}{r} \right) \frac{(Q_\bv \r)_i (Q_\bv \r)_j \r_k}{r^2R^2}\right.\\
&\quad -2 \frac{(Q_{\bv})_{i,j} \r_k}{r} \left(\frac{1}{R^2}-\frac{1}{r^2}\right) -4\frac{(Q_\bv \r)_i(Q_\bv \r)_j \bv_k}{rR^3}\\
&\left. \quad + \frac{2(Q_\bv)_{i,j} \bv_k}{R^2} -2 \frac{(Q_\bv)_{i,k}(Q_\bv \r)_j+(Q_\bv)_{j,k}(Q_\bv \r)_i}{r R^2}\right\},
\end{align*}
{where $T^{S,3}_{ijk}(\x,\y) = -\frac{3 r_ir_jr_k}{4\pi r^5}$ is the traction kernel of the Stokes single-layer potential in 3D.}
\subsection{Boussinesq-Cerruti layer potentials}

In this section we review results on limits of {the traction kernels for Stokes single-layer potentials,} and prove analogous results for the {normal stress of potentials involving $G^{B,2}$ and $G^{B,3}.$} The following standard theorem characterizes the normal stress of Stokeslet layer potentials. 

\begin{theorem}\label{thm:gs23_lim}
Let $\Omega$ be a domain in $\mathbb{R}^d,$ $d=2,3$ with smooth boundary $\partial \Omega.$ For $\rho \in L^2(\partial \Omega;\mathbb{R}^d),$
{\begin{align*}\lim_{\delta\to 0^+} \n_i(\x) \,&\left[\int T^{S,d}_{ijk}(\cdot,\y)\rho_k(\y)\,{\rm d}S(\y)\right](\x-\delta \n)\\
&=\frac{1}{2}\rho_j(\x) + \int \n_i(\x)\,T^{S,d}_{ijk}({\bf x},\y) \rho_k(\y)\,{\rm d}S(\y),
\end{align*}}
in an $L^2$ sense. Moreover, the integral operator on the right-hand side is compact from $L^2(\partial \Omega;\mathbb{R}^d)$ to itself. 
\end{theorem}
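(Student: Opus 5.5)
The plan is to treat $T^{S,d}$ as a double-layer-type kernel and reduce the jump relation to the classical one for the Laplace double layer. There are three steps: compactness of the boundary operator, the jump computed for continuous densities, and extension to $L^2$.

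\textbf{Compactness.} Write $\n_i(\x)T^{S,d}_{ijk}(\x,\y)= c_d\,(\n(\x)\cdot\r)\, r_j r_k / r^{d+2}$, with $c_2=-1/\pi$ and $c_3=-3/(4\pi)$. For $\x,\y\in\partial\Omega$ with $\partial\Omega$ smooth, we have $|\n(\x)\cdot(\x-\y)|\le C|\x-\y|^2$. Hence the kernel on the boundary is $O(r^{2-d})$: bounded in 2D (indeed smooth), and weakly singular $O(1/r)$ on a 2D surface in 3D. Weakly singular integral operators on a compact smooth manifold are compact on $L^2$, for example as norm limits of truncated operators with continuous kernels via the Schur test. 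This gives the compactness claim.

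\textbf{Jump relation for continuous densities.} Fix $\x\in\partial\Omega$ and set $\x_\delta=\x-\delta\n(\x)$. Split
\[
\rho(\y)=\rho(\x)+\bigl(\rho(\y)-\rho(\x)\bigr).
\]
For the second term, use the standard estimate that the kernel evaluated at $\x_\delta$ is bounded by $C\,(\delta+|\n\cdot\r|)/|\x_\delta-\y|^{d+1}$, which is uniformly integrable against the vanishing factor $|\rho(\y)-\rho(\x)|$. Dominated convergence then gives the direct value, exactly as in the Laplace double-layer proof. For the constant term, one must compute
\[
\lim_{\delta\to0}\int \n_i(\x)T^{S,d}_{ijk}(\x_\delta,\y)\,dS(\y).
\]
Do this by using the fact that the Stokes stresslet with the normal at $\y$ satisfies the Gauss-type identity: its integral over $\partial\Omega$ equals $\delta_{jk}$ (interior), $\tfrac12\delta_{jk}$ on the boundary, and $0$ outside. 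This identity follows from the divergence theorem, since $T^{S,d}$ is the stress of the Stokeslet and is divergence-free in $\y$ away from $\x$, with a point-force singularity. The difference between using $\n(\x)$ and $\n(\y)$ is a kernel $(\n(\x)-\n(\y))_i T_{ijk}$, which is $O(r^{1-d})$ and continuous across the boundary. So the jumps of the adjoint (target-normal) version and of the source-normal version are equal in magnitude with the appropriate sign. This is the expected $\tfrac12\rho_j$ with the sign fixed by approaching from inside.

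\textbf{Extension to $L^2$.} Show that the maximal function $\sup_\delta|\cdot|$ of the near-boundary evaluation is bounded on $L^2(\partial\Omega)$. This uses a local flattening plus Calderón–Zygmund bounds, or the nontangential maximal estimate for double-layer-type kernels on smooth surfaces. Then the density of continuous functions gives $L^2$ convergence.

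\textbf{Main obstacle.} I expect the main obstacle to be the sign and constant bookkeeping in the Gauss-type identity for the target-normal form, together with the maximal-function bound. For the latter, I would cite the classical result for smooth boundaries, cf. \cite{hsiao2008boundary,pozrikidis1992boundary}, rather than reprove it.
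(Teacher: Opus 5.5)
Your overall strategy is the paper's. The paper cites the continuous-density jump relation (Hsiao--Wendland) and then the Kersten/Kress density argument for the $L^2$ statement. Your compactness paragraph is fine. Your Gauss-identity computation with the splitting $\n(\x)=\n(\y)+(\n(\x)-\n(\y))$ is a correct way to handle the constant term. It gives $\delta_{jk}$ inside and $\tfrac12\delta_{jk}$ on $\partial\Omega$, so the jump is $+\tfrac12\rho_j$, with the same sign as for the source-normal kernel.

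However, two estimates in your jump step are off by one power of $r=|\x-\y|$, and as written they make that step fail. You have the power right in your compactness paragraph. With $\x_\delta=\x-\delta\n(\x)$ one has $|\n_i(\x)T^{S,d}_{ijk}(\x_\delta,\y)|\le C|\n(\x)\cdot(\x_\delta-\y)|\,|\x_\delta-\y|^{-d}\le C(\delta+|\x-\y|^2)\,|\x_\delta-\y|^{-d}$, not $|\x_\delta-\y|^{-(d+1)}$. With your exponent, the $\delta$-part has $L^1(dS(\y))$-norm of order $\delta^{-1}$. The majorant of the other part is of size $|\x-\y|^{1-d}$, which is logarithmically non-integrable on the $(d-1)$-dimensional boundary. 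So nothing is uniformly integrable. Similarly, $(\n(\x)-\n(\y))_iT^{S,d}_{ijk}(\x_\delta,\y)$ is $O(|\x-\y|^{2-d})$ uniformly in $\delta$, not $O(r^{1-d})$. This follows from $|T^{S,d}|\lesssim|\x_\delta-\y|^{1-d}$, $|\n(\x)-\n(\y)|\lesssim|\x-\y|$, and $|\x_\delta-\y|\gtrsim|\x-\y|$ for small $\delta$. A kernel that were only $O(r^{1-d})$ would not be integrable. So your claim that this remainder is continuous across $\partial\Omega$ depends on the correct power.

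Even after the correction, dominated convergence does not apply verbatim to the Poisson-type piece $\delta\,|\x_\delta-\y|^{-d}$. Its supremum over $\delta$ is of size $|\x-\y|^{1-d}$, again non-integrable. You have two ways to fix this, and both classes of densities are dense in $L^2$:
\begin{enumerate}
\item Use the approximate-identity argument. The piece has uniformly bounded $L^1$-norm; split into $|\y-\x|<\eta$, where $|\rho(\y)-\rho(\x)|$ is small, and $|\y-\x|\ge\eta$, where the piece is $O(\delta)$.
\item Take H\"older densities, for which $|\rho(\y)-\rho(\x)|\lesssim|\x-\y|^\gamma$ supplies an integrable majorant.
\end{enumerate}
Finally, with the corrected bound your $L^2$ step needs no Calder\'on--Zygmund theory. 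The kernel is dominated by a Poisson kernel plus $C|\x-\y|^{2-d}$, so the Schur test gives $L^2$ bounds uniform in $\delta$. The maximal operator is likewise controlled by the Hardy--Littlewood maximal function plus a weakly singular operator. This is precisely the Kersten/Kress argument the paper invokes.
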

\begin{proof}
The proof follows from standard arguments. For continuous densities, $\rho,$ the corresponding limits can be found in~\cite{hsiao2008boundary}, for example. From these, the $L^2$ statement can be obtained using a similar argument as in~\cite{kersten1980grenz,kress_2014}.
\end{proof}

We now turn to the {remainder kernels $$W^d_{ijk}(\cdot,\y; \hat{\bf v}):=\sigma[G^{B,d}(\cdot,\y;\hat{\bf v})]_{ijk}-2T^{S,d}_{ijk},$$ with the specific choice $\bv(\y) = \n(\y),$} the outward facing normal. {In three dimensions $W^3$ is singular on the half-line $\mathcal{L}_{\y,\bv},$ and so, where this half-line intersects the boundary, the stress will not be well-defined. In two dimensions, no such difficulty arises as $\sigma[G^{B,2}]$ has a removable singularity along the ray, and $W^{2} \equiv 0.$} This issue is a \emph{nonlocal} phenomenon in so far as the singularities arising near a point $\x \in \partial \Omega$ are due to the density at points $\y \in \partial \Omega$ which lie outside of an open neighborhood of $\x.$ In light of this, we analyze the kernels after multiplying by sufficiently narrow bump functions.

To that end, in the following we let $\chi: \mathbb{R} \to \mathbb{R}$ be a smooth function which is identically $1$ for $|x| \leq 1/2,$ and identically zero for $|x|\geq 1.$ For any $r>0$ set $\chi_r(x)=\chi(x/r).$ For a domain $\Omega$ in $\mathbb{R}^2$ or $\mathbb{R}^3$ with smooth boundary $\partial \Omega$ set 
$$\beta(\Omega) = \sup \{t \in \mathbb{R}^+\,|\, \x + s \n(\x) \notin \Omega, \,{\rm for}\,\,{\rm all}\,\,s \in (0,t),\,\,\x \in \partial \Omega\},$$
and $\beta'(\Omega)= {\rm min}\{1,\beta(\Omega)/2\}.$ For ease of exposition when there is no risk of confusion we will write $\beta'$ in place of $\beta'(\Omega)$ {{(see Figure~\ref{fig:R-geometry}).}}

\begin{theorem}\label{thm:g2_lim}
Let $\Omega$ be a domain in $\mathbb{R}^d,$ $d=2,3$ with smooth boundary $\partial \Omega,$ and $\beta'$ be defined as above. For $\rho \in L^2(\partial \Omega; \mathbb{R}^d),${\begin{align*}\lim_{\delta\to 0^+} \n_i(\x) &\int W^{d}_{ijk}({\bf x}-\delta \n(\x),{\bf y};\n(\y))\chi_{{\beta}'}(|\x-\delta\n(\x)-\y|)\rho_k(\y)\,{\rm d}S(\y)\\
&= \int \n_i(\x)W^{d}_{ijk}(\x,\y;\n(\y))\,\,\chi_{{\beta}'}(|\x-\y|) \rho_k(\y)\,{\rm d}S(\y),
\end{align*}}
in an $L^2$ sense. Moreover, the integral operator on the right-hand side is compact from $L^2(\partial \Omega; \mathbb{R}^d)$ to itself.
\end{theorem}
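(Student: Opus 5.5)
In two dimensions there is nothing to prove: $W^2\equiv 0$, so both sides vanish identically. The work is all in $d=3$. The plan is to show that, after the cutoff $\chi_{\beta'}$, the kernel $\n_i(\x)W^3_{ijk}(\x,\y;\n(\y))$ is only weakly singular on $\partial\Omega\times\partial\Omega$ (of order $|\x-\y|^{-1}$). The same should hold uniformly for the shifted kernel with $\x$ replaced by $\x-\delta\n(\x)$, $0<\delta<\beta'$. A weakly singular kernel with a uniform bound of this type gives compactness on $L^2$ and convergence of the limit by standard arguments \cite{kress_2014}.

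\textbf{Step 1: control of $R$.} The only dangerous factor in $W^3$ is $R=r-\r\cdot\n(\y)$, which vanishes on the half-line $\mathcal{L}_{\y,\n(\y)}$. Write $\r=\x-\delta\n(\x)-\y$ with $\x,\y\in\partial\Omega$ and $|\r|<\beta'\le1$. Decompose $\r$ into its normal component $\r\cdot\n(\y)$ and its tangential component $Q_{\n(\y)}\r$.

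By smoothness of $\partial\Omega$, the normal component of $\x-\y$ is $O(|\x-\y|^2)$. The shift contributes $-\delta\,\n(\x)\cdot\n(\y)\le -\delta/2$ for $|\x-\y|$ small. Hence $\r\cdot\n(\y)\le C r^2$, and this gives $R\ge r-Cr^2\ge r/2$ for $r$ small.

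For $r$ not small but still below $\beta'$, the definition of $\beta(\Omega)$ is what matters. Points $\y+s\n(\y)$ with $0<s<2\beta'$ lie outside $\Omega$, so interior points (and boundary points other than those touching the ray) stay away from the ray. A compactness argument over the set $\{\y\in\partial\Omega,\ r\in[\epsilon,\beta']\}$ should give $R\ge c\,r$ there as well. I expect this to be the main technical obstacle: making precise that $\chi_{\beta'}$ removes every intersection of the ray with the boundary or its interior shifts. My first attempt is to argue that $R/r$ is continuous and positive on the compact set in question. Positivity holds because $\x-\delta\n(\x)$ lies in $\overline\Omega$, while the ray segment of length $<2\beta'$ lies outside $\overline\Omega$.

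\textbf{Step 2: size of the kernel.} Given $R\sim r$, inspect each term of $W^3$. The terms
\[
(Q\r)_i(Q\r)_j\r_k/(r^2R^3),\qquad (Q\r)_i(Q\r)_j\bv_k/(rR^3),\qquad Q_{ij}\bv_k/R^2
\]
are individually $O(r^{-2})$, so some cancellation or a contraction with $\n_i(\x)$ is needed. Contracting with $\n_i(\x)$ gives $\n_i(\x)(Q_{\n(\y)})_{ij}=O(|\x-\y|)$ on the boundary, since $Q_{\n(\y)}\n(\x)=O(|\x-\y|)$. Also $(Q\r)\cdot\n(\x)=O(r^2+\delta r)$.

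The remaining term to examine is $Q_{ij}\r_k(R^{-2}-r^{-2})$. Using $R^{-2}-r^{-2}=(r^2-R^2)/(r^2R^2)$ and $r-R=\r\cdot\n(\y)=O(r^2+\delta)$, I will check it term by term. On the boundary ($\delta=0$) I expect every term to be $O(r^{-1})$, which is weakly singular on a two-dimensional surface and hence compact on $L^2$.

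\textbf{Step 3: the limit.} For $\delta>0$ some terms carry a factor $\delta/r^2$. Since $r\gtrsim\delta+|\x-\y|$, these behave like Poisson-type kernels, $\delta\,(\delta+|\x-\y|)^{-2}$. Such kernels are bounded on $L^2$ uniformly in $\delta$, and they tend to zero. They tend to zero rather than to a jump because of the extra factors of $Q$ applied to $\n$, which supply an additional power of $r$.

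I would therefore establish a uniform bound of the form $C(\delta+|\x-\y|)^{-1}$ plus a term $\delta\,|\x-\y|\,(\delta+|\x-\y|)^{-3}$ that is integrable uniformly in $\delta$. With that bound, Schur's test plus pointwise convergence on continuous densities and density of $C(\partial\Omega)$ in $L^2$ give the $L^2$ convergence. This mirrors the proof of Theorem~\ref{thm:gs23_lim}, except that here no jump term appears.
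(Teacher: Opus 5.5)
Your proposal is correct and follows essentially the paper's argument: $R\ge c\,r$ on the support of the cutoff, then the observation that every term of $\n_i(\x)W^3_{ijk}$ carries a factor $Q_{\n(\y)}\n(\x)=O(|\x-\y|)$, and then Schur's test. Bounding $\n(\x)\cdot Q\r$ by $r\,|Q_{\n(\y)}\n(\x)|$ rather than by $O(r^2+\delta r)$ gives the uniform bound $C/|\x-\y|$ directly and removes your extra $\delta$-dependent term. The only difference is the closing step: the paper proves $|A(\x,\y;\delta_1)-A(\x,\y;\delta_2)|\le C|\delta_1-\delta_2|/r^2$ and uses a $\chi_\epsilon$ near/far split to get operator-norm convergence at rate $\delta^{1/3}$, with compactness as a norm limit of smooth-kernel operators, whereas your uniform bound plus density argument gives strong convergence and compactness directly from weak singularity, which suffices for the statement.
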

\begin{proof}
{Since $W^2 \equiv 0,$ all that remains is to prove the result in three dimensions.} 

\begin{figure}[ht]
\centering
\begin{tikzpicture}[
  line cap=round, line join=round, scale=2.6,
  every node/.style={font=\small}
]
  \fill[blue!5]
    plot[smooth, variable=\t, domain=-1.20:1.40, samples=100] ({\t},{-0.06*\t*\t})
    -- (1.40,-0.85) -- (-1.20,-0.85) -- cycle;

  \fill[blue!5]
    plot[smooth, variable=\t, domain=-1.20:1.40, samples=100] ({\t},{1.20+0.05*\t*\t})
    -- (1.40, 1.80) -- (-1.20, 1.80) -- cycle;

  \draw[thick]
    plot[smooth, variable=\t, domain=-1.20:1.40, samples=100] ({\t},{-0.06*\t*\t});

  \draw[thick]
    plot[smooth, variable=\t, domain=-1.20:1.40, samples=100] ({\t},{1.20+0.05*\t*\t});

  \node at (-0.95,-0.45) {$\Omega$};
  \node at (-0.95, 1.55) {$\Omega$};
  \node at (-0.95, 0.60) {$\Omega^c$};
  \node[anchor=west] at (1.40,-0.12) {$\partial\Omega$};
  \node[anchor=west] at (1.40, 1.22) {$\partial\Omega$};

  \coordinate (y)  at ( 0.15,-0.00135);
  \coordinate (x)  at ( 0.55,-0.0182);

  \coordinate (yn) at ( 0.160, 0.549);
  \coordinate (xn) at ( 0.586, 0.531);

  \coordinate (xd) at ( 0.522,-0.437);

  \draw[thin, dashed, black!75] (y) circle (0.50);
  \coordinate (bedge) at ($(y)+({0.50*cos(135)},{0.50*sin(135)})$);
  \draw[->, thin, black] (y) -- (bedge);
  \node[black] at ($(y)!0.5!(bedge)+(0.06,0.05)$) {$\beta'$};

  \draw[decorate, decoration={snake, amplitude=0.6mm, segment length=2.5mm},
        thick, purple!70!black]
    (y) -- ($(y) + ({0.012*1.18},{1.000*1.18})$);
  \node[purple!70!black, font=\footnotesize, anchor=west]
    at ($(y) + ({0.012*1.05+0.045},{1.000*1.05})$) {$\mathcal{L}_{\y,\n(\y)}$};

  \draw[->, thick, blue!70!black] (y) -- (yn)
    node[anchor=south east, inner sep=2pt] {$\n(\y)$};
  \draw[->, thick, blue!70!black] (x) -- (xn)
    node[anchor=south, inner sep=2pt] {$\n(\x)$};

  \draw[->, thick, red!70!black, dashed] (x) -- (xd);

  \draw[->, very thick, orange!80!black] (y) -- (xd);
  \node[orange!80!black]
    at ($(y)!0.50!(xd) + (0.10,0.04)$) {$\r'$};

  \fill[red!85!black] (y)  circle (0.030);
  \fill[red!85!black] (x)  circle (0.030);
  \fill[black!75]    (xd) circle (0.022);

  \node[anchor=east]       at ($(y) +(-0.030,0.045)$) {$\y$};
  \node[anchor=north west] at ($(x) +(0.030,-0.025)$) {$\x$};
  \node[anchor=west]       at ($(xd)+(0.040,0)$)     {$\x-\delta\,\n(\x)$};

\end{tikzpicture}
\caption{Geometry of sources, targets, and singularities.}
\label{fig:R-geometry}
\end{figure}
In 3D, we first use the smoothness of kernels away from $\partial \Omega$ to show that the operator with integral kernel {$$A_{jk}(\x,\y;\delta):=\n_i(\x)W^3_{ijk}(\x-\delta \n(\x),\y,\n(\y)) {\chi_{\beta'}}(|\x-\delta\n(\x)-\y|),$$} is compact for $\delta>0$ and sufficiently small, and uniformly bounded. We then show that the operator with $\delta=0$ is the norm limit of these operators and hence is also compact. Though the proof involves fairly standard arguments, we include it here for completeness. In the following we use $C$ to represent arbitrary constants which depend only on $\partial \Omega.$

For $\delta>0$ and sufficiently small, ${A_{jk}}(\x,\y;\delta)$ is smooth and hence the resulting integral operator is compact from $L^2(\partial \Omega; \mathbb{R}^3)$ to itself. Next, if $R(\x,\y;\delta):=|\x-\delta \n(\x)-\y| - (\x-\delta \n(\x) - \y) \cdot \n(\y)$ for $\x,\y \in \partial \Omega,$ then there exist constants $r_0,\delta_0,c>0,$ depending only on $\Omega,$ such that $|R(\x,\y;\delta)|>c |\x-\delta \n(\x)-\y|,$ whenever $|\x-\y|<r_0$ and $\delta<\delta_{0}$.  Moreover, without loss of generality we can suppose that $\delta_0$ is chosen sufficiently small so that ${A}(\x,\y;\delta)$ is smooth in $\x,\y \in \partial \Omega$ for all $0< \delta<\delta_0.$ With this bound on $R$ it follows that for all $|\x-\y|<r_0$ and $\delta<\delta_0,$
$$|{A_{jk}}(\x,\y;\delta)| \le C \frac{|Q_{\n(\y)} \n(\x)|}{r^2},\quad j,k=1,2,3,$$
for some constant $C,$ and with $r = |\x-\delta\n(\x)-\y|.$ If $\partial \Omega$ is twice continuously differentiable then Taylor's theorem (see~\cite{kress_2014} for example) implies that there exist constants $R_0$ and $C$ such that $|Q_{\n(\y)}\n(\x)|<C |\x-\y|$ whenever $|\x-\y| < R_0.$ Hence, for $\x$ and $\y$ sufficiently close, and $\delta$ sufficiently small,
$$|{A_{jk}}(\x,\y;\delta)| \le C\frac{1}{|\x-\y|}.$$

{Hence, the integral operators with kernels ${A}(\x,\y;\delta),$} are compact, and uniformly bounded from $L^2(\partial \Omega; \mathbb{R}^3) \to L^2(\partial \Omega; \mathbb{R}^3)$ for all non-negative $\delta$ less than some positive $\hat{\delta},$ depending only on $\partial \Omega.$ This follows from the fact that ${A}(\x,\y;\delta)$ has a weakly singular kernel bounded by $C/|\x-\y|$ with a constant $C$ which is independent of $\delta,$ see ~\cite{punchin1988weakly} for example.

Similar arguments give that for $0 \le \delta_1, \delta_2 <\delta_0,$
$$|{A}(\x,\y;\delta_1)-{A}(\x,\y;\delta_2)| \le C \frac{|\delta_1-\delta_2|}{r^2}.$$
For any $0< \delta <\delta_0$ and any $\rho \in L^2(\partial \Omega; \mathbb{R}^3),$ and any $\epsilon >0,$ we note that
\begin{align*}
\|\int_{\partial \Omega}& {A}_{jk}(\x,\y; \delta)\,\rho_k(\y)\,{\rm d}S(\y) - \int_{\partial \Omega} {A}_{jk}(\x,\y;0)\,\rho_k(\y)\,{\rm d}S(\y)\|\\
&\le \|\int_{\partial \Omega} |{A}_{jk}(\x,\y;\delta)-{A}_{jk}(\x,\y;0)|\,|\rho_k(\y)|\,{\rm d}S(\y) \|\\
& \le \| \int_{\partial \Omega} \chi_{\epsilon}(|\x-\y|)|{A}_{jk}(\x,\y;\delta)-{A}_{jk}(\x,\y;0)|\,|\rho_k(\y)|\,{\rm d}S(\y) \|\\
& \quad + \| \int_{\partial \Omega} (1-\chi_{\epsilon}(|\x-\y|))|{A}_{jk}(\x,\y;\delta)-{A}_{jk}(\x,\y;0)|\,|\rho_k(\y)|\,{\rm d}S(\y) \|.
\end{align*}
Here $\| \cdot \|$ denotes the norm on $L^2(\partial \Omega; \mathbb{R}^3).$ Inserting our previous bound on ${A}$ into the above inequality yields
\begin{align*}
\|\int_{\partial \Omega}& {A}_{jk}(\x,\y; \delta)\,\rho_k(\y)\,{\rm d}S(\y) - \int_{\partial \Omega} {A}_{jk}(\x,\y;0)\,\rho_k(\y)\,{\rm d}S(\y)\|\\
& \le \| \int_{\partial \Omega} \chi_{\epsilon}(\x-\y)\frac{C}{|\x-\y|} |\rho_k(\y)|\,{\rm d}S(\y)\| + \| \int_{\partial \Omega} \frac{C \delta}{\epsilon^2} \| \rho(\y)\|_{\ell^2}\,{\rm d}S(\y)\|,
\end{align*}
where $\|\rho(\y)\|_{\ell^2}= \sqrt{\rho_1^2(\y)+\rho_2^2(\y)+\rho_3^2(\y)}$ is the standard Euclidean norm. The smoothness of $\partial \Omega$ implies that there is an overall constant $C$ such that
$$\int_{\partial \Omega} \chi_{\epsilon}(|\x-\y|)\frac{1}{|\x-\y|}\,{\rm d}S(\y) \le C \epsilon,$$
for all $\x \in \partial \Omega,$ and 
$$\int_{\partial \Omega} \chi_{\epsilon}(|\x-\y|)\frac{1}{|\x-\y|}\,{\rm d}S(\x) \le C \epsilon,$$
for all $\y \in \partial \Omega,$ and $\epsilon >0.$ It follows immediately from the Schur test~\cite{schur1911} that
$$\| \int_{\partial \Omega} \chi_{\epsilon}(|\x-\y|)\frac{1}{|\x-\y|} \|\rho(\y)\|_{\ell^2}\,{\rm d}S(\y)\| \le C\epsilon \|\rho\|.$$

We also observe that 
$$ \| \int_{\partial \Omega} \frac{ \delta}{\epsilon^2} | \rho(\y)|\,{\rm d}S(\y)\|\le \frac{C\delta}{\epsilon^2} |\partial \Omega|^{1/2} \|\rho\|.$$
Hence
\begin{align*}
\|\int_{\partial \Omega}& {A}_{jk}(\x,\y; \delta)\,\rho(\y)\,{\rm d}S(\y) - \int_{\partial \Omega} {A}_{jk}(\x,\y;0)\,\rho(\y)\,{\rm d}S(\y)\|\\
& \le C\left( \epsilon + \frac{\delta}{\epsilon^2}\right) \|\rho\|,
\end{align*}
for some constant $C$ independent of $\delta,\epsilon$ and $\rho.$ Choosing $\epsilon = \delta^{1/3}$ in the above inequality gives
\begin{align*}
\|\int_{\partial \Omega}& {A}_{jk}(\x,\y; \delta)\,\rho_k(\y)\,{\rm d}S(\y) - \int_{\partial \Omega} {A}_{jk}(\x,\y;0)\,\rho_k(\y)\,{\rm d}S(\y)\|\\
& \le 2C \delta^{1/3} \|\rho\|.
\end{align*}
In particular, the integral operator with kernel ${A}(\x,\y;0)$ is the norm limit of the compact operators with kernels ${A}(\x,\y;\delta),$ and hence is compact.

\end{proof}

\begin{remark}
    Up to rotations and translations, the functions $G^{B,2}$ and $G^{B,3}$ defined above are the Boussinesq-Cerruti solutions \cite{boussinesq1885application,cerruti1882ricerche} for the half-plane problem. In \cite{mindlin} Mindlin derived a more general formula for the Green's function in a halfspace with zero normal boundary stress. Taking the limit as the source approaches the boundary recovers $G^{B,2}$ and $G^{B,3}.$
\end{remark}

\section{String kernel representations}\label{sec:string}

Intuitively, the functions $G^{B,2}$ and $G^{B,3}$ defined in the previous section are ideal candidates from which to construct solutions to equations (\ref{eqn:elasto}) and (\ref{eqn:bc}). Indeed, one is tempted to introduce the ansatzes
\begin{align*}
u_{2}(\x) &= \int_{\partial \Omega} G^{B,2}(\x,\y;\n(\y))\,\rho_2(\y)\,{\rm d}S_{\y},\\
u_{3}(\x) &= \int_{\partial \Omega} G^{B,3}(\x,\y;\n(\y))\,\rho_3(\y)\,{\rm d}S_{\y},
\end{align*}
for the two-dimensional and three-dimensional problems, respectively, where $\rho_{2},\rho_3$ are unknown densities on the boundary $\partial \Omega,$ and the vectors $\n$ in the functions $G^{B,2}$ and $G^{B,3}$ are chosen to point in the direction of the outward facing normal of $\Omega$ at $\y.$ For exterior problems the directions of $\n$ should be flipped. 

Unfortunately, the branch cuts of $G^{B,2}$ and the line singularities of $G^{B,3}$ mean that for any exterior problem, or interior problems on sufficiently non-convex domains, the functions $u_2$ and $u_3$ will not satisfy the PDE $L[u] = 0.$ In this section we construct new functions $K^{B,2}$ and $K^{B,3}$ which have the same singularities as $G^{B,2}$ and $G^{B,3}$ near $\y,$ but possess singularities or discontinuities only on the \emph{finite} line segment $\ell_{\y,\bv,h}:=\{\x = \y + t \bv\,|\, t \in [0,h]\},$ where $h>0$ is a free parameter. The construction is based on the observation (see \cite{gimbutasFastMultipoleMethod2016} for example), that certain derivatives of $G^{B,2}$ and $G^{B,3}$ do not exhibit these line singularities. Appropriate kernels can thus be constructed by taking definite integrals over finite line segments. In two dimensions this is given by the following lemma which follows immediately from the definition of $G^{B,2}.$
\begin{lemma}
The function 
\begin{align*}{-}\bv \cdot \nabla_\y G^{B,2}(\x,\y;\bv)&=\frac{1}{2\pi \mu} \left[-\frac{\r\cdot\bv}{\alpha r^2}{\rm I}_2 + \frac{\bv \r^T+\r \bv^T}{r^2}-{2}\frac{(\r \cdot \bv)\r \r^T}{r^4}  \right]\\
&\quad +\frac{1}{2\pi \mu} \frac{1-\alpha}{\alpha} \frac{\bv^\perp \cdot \r}{r^2}\begin{pmatrix} \phantom{-}0 &1 \\ -1 & 0 \end{pmatrix},
\end{align*}
has a removable singularity on the line $\{\x = \y + t \bv\,|\, t \in (0,\infty)\},$ and satisfies $L[\bv \cdot \nabla_\y G^{B,2}](\x) = 0$ for all $\x \neq \y.$     
\end{lemma}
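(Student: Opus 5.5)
The plan is to differentiate $G^{B,2}$ term by term in $\y$ along $\bv$, noting that $\r=\x-\y$, so that $-\bv\cdot\nabla_\y = \bv\cdot\nabla_\x$ acting on functions of $\r$. With $\nabla_\x r = \r/r$, one gets $\bv\cdot\nabla_\x(-\log r) = -(\r\cdot\bv)/r^2$. Applying the product rule to $\r\r^T/r^2$ gives
\[
\bv\cdot\nabla_\x \frac{\r\r^T}{r^2} = \frac{\bv\r^T+\r\bv^T}{r^2} - 2\frac{(\r\cdot\bv)\r\r^T}{r^4}.
\]
The Stokeslet part $2G^{S,2}$ has prefactor $\tfrac{1}{2\pi\mu}$, and the $-\log(r){\rm I}_2$ term in the bracket has prefactor $\tfrac{1}{2\pi\mu}\tfrac{1-\alpha}{\alpha}$. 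Combining the two logarithmic contributions gives $-\tfrac{(\r\cdot\bv)}{r^2}\bigl(1+\tfrac{1-\alpha}{\alpha}\bigr){\rm I}_2 = -\tfrac{\r\cdot\bv}{\alpha r^2}{\rm I}_2$, which is the first bracket in the statement.

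For the argument term I will write $\theta(\x)={\rm arg}(\bv^\perp\cdot\r,-\bv\cdot\r)$, the angle of the point $(-\bv\cdot\r)+i(\bv^\perp\cdot\r)$. Away from the branch cut, which is exactly where $-\bv\cdot\r<0$ and $\bv^\perp\cdot\r=0$, i.e.\ the half-line $\mathcal{L}_{\y,\bv}$, $\theta$ is smooth. Its gradient is that of the standard angle function, $\nabla\theta = (a\nabla b - b\nabla a)/(a^2+b^2)$ with $a=-\bv\cdot\r$, $b=\bv^\perp\cdot\r$, and $a^2+b^2=r^2$. Contracting with $\bv$ and using $\bv\cdot\bv=1$ and $\bv^\perp\cdot\bv=0$ gives $\bv\cdot\nabla\theta = \bv^\perp\cdot\r/r^2$, up to the sign fixed by the convention. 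This produces the last term of the formula.

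For the removable singularity, note that $\theta$ jumps by the constant $2\pi$ across the cut, so its derivative has no distributional part supported there except at $\y$. More concretely, the resulting expression is a rational function of $\r$ that is smooth on $\mathbb{R}^2\setminus\{\y\}$. Since it agrees with the derivative off the half-line, it furnishes the continuous extension across it.

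For $L[\cdot]=0$, the operator $L$ acts in $\x$ with constant coefficients and commutes with $\bv\cdot\nabla_\y$. Since $L[G^{B,2}]=0$ off $\mathcal{L}_{\y,\bv}$ (stated earlier), differentiating gives $L[\bv\cdot\nabla_\y G^{B,2}]=0$ on $\mathbb{R}^2\setminus\mathcal{L}_{\y,\bv}$. The function is real-analytic on $\mathbb{R}^2\setminus\{\y\}$ because it is rational with denominator a power of $r$, so $L$ of it is also analytic there. Vanishing on an open dense set then forces it to vanish on all $\x\neq\y$ by continuity.

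The only delicate step is the bookkeeping of the sign and orientation convention for ${\rm arg}$ and $\bv^\perp$. I would check it on $\bv=(0,1)^T$ with explicit coordinates.
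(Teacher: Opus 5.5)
Your proposal is correct and takes essentially the same approach as the paper, which simply asserts that the lemma follows from direct differentiation of $G^{B,2}$ together with the earlier fact that $L[G^{B,2}]=0$ off $\mathcal{L}_{\y,\bv}$. One small point: your formula $\nabla\theta=(a\nabla b-b\nabla a)/r^2$, with $a=-\bv\cdot\r$ and $b=\bv^\perp\cdot\r$, already gives exactly $+\bv^\perp\cdot\r/r^2$, so the ``up to the sign'' hedge is unnecessary.
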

The following corollary follows immediately from the previous lemma, and gives the construction of a suitable function $K^{B,2}$ having the desired properties.
\begin{corollary}\label{col:g2}
Defining $K^{B,2}$ by
\begin{align*}
    K^{B,2}(\x,\y;\bv,h) &= -\int_{0}^h \bv \cdot \nabla_\y G^{B,2}(\x,\y+t \bv;\bv)\,{\rm d}t\\[5pt]
    & = G^{B,2}(\x,\y;\bv) - G^{B,2}(\x,\y+h\bv;\bv),
\end{align*}
gives a function which satisfies $L[K^{B,2}]=0$ everywhere except the line segment $\ell_{\y,\bv,h}.$ Moreover, $K^{B,2}(\x,\y;\bv,h)-G^{B,2}(\x,\y;\bv)$ is a smooth function of $\x$ in a neighborhood of $\y.$
\end{corollary}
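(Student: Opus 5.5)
The plan is to prove Corollary~\ref{col:g2} in three steps: first establish the second equality by the fundamental theorem of calculus, then read off the singular set and the PDE from the two-term expression, and finally deduce the local smoothness claim.

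\textbf{Step 1: the integral identity.} Fix $\x$ off the segment $\ell_{\y,\bv,h}$ and set $g(t) = G^{B,2}(\x,\y+t\bv;\bv)$ for $t\in[0,h]$. Two observations are needed.

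\begin{itemize}
\item The ray $\mathcal{L}_{\y+t\bv,\bv}$ is contained in $\mathcal{L}_{\y,\bv}$. Hence, for $\x$ not on $\mathcal{L}_{\y,\bv}$, the function $g$ is smooth on $[0,h]$. The chain rule gives $g'(t) = \bv\cdot\nabla_\y G^{B,2}(\x,\y+t\bv;\bv)$, so
$$-\int_0^h g'(t)\,{\rm d}t = g(0)-g(h).$$
\item For $\x$ on the ray beyond $\y+h\bv$, the argument term in $G^{B,2}$ jumps. However, $\x-\y$ and $\x-\y-h\bv$ are both positive multiples of $\bv$, so both terms lie on the branch cut with the same value of ${\rm arg}$, namely $\pm\pi$ chosen consistently. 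The jump therefore cancels in the difference.
\end{itemize}

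It follows that the difference $G^{B,2}(\x,\y;\bv)-G^{B,2}(\x,\y+h\bv;\bv)$ extends smoothly across $\{\y+t\bv : t>h\}$. Equivalently, the integrand given by the preceding lemma has only a removable singularity there, so the integral is smooth in $\x$ away from the segment, and the two expressions agree on a dense set and hence everywhere off $\ell_{\y,\bv,h}$.

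\textbf{Step 2: the PDE.} By the lemma, $L[\bv\cdot\nabla_\y G^{B,2}(\cdot,\y+t\bv;\bv)](\x)=0$ for $\x\neq\y+t\bv$. For $\x$ at positive distance from $\ell_{\y,\bv,h}$, the integrand and its $\x$-derivatives up to second order are bounded uniformly in $t\in[0,h]$. We may therefore differentiate under the integral sign, which gives $L[K^{B,2}](\x)=0$.

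Alternatively, one can argue directly from the second expression. $L$ annihilates each of the two terms off its ray, and off the segment the combination is smooth across the overlapping part of the rays by the jump cancellation of Step 1. A function that is smooth and satisfies $L=0$ on both sides of the ray is annihilated by $L$ everywhere there.

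\textbf{Step 3: local smoothness.} We have $K^{B,2}(\x,\y;\bv,h)-G^{B,2}(\x,\y;\bv) = -G^{B,2}(\x,\y+h\bv;\bv)$. Since $h>0$, the point $\y+h\bv$ lies at distance $h$ from $\y$. The only concern is the branch cut of this term, the ray from $\y+h\bv$ in direction $\bv$, and it stays at distance at least $h$ from $\y$. Hence $G^{B,2}(\cdot,\y+h\bv;\bv)$ is smooth on the ball $|\x-\y|<h$, which proves the claim.

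The main obstacle is Step 1: tracking the branch of the ${\rm arg}$ term along the part of the ray beyond $\y+h\bv$ and confirming that the jumps cancel, rather than producing an error of $2\pi$ times a constant matrix. This is what shows the two expressions agree everywhere off the segment. The remaining steps are routine.
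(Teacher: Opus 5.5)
Your proof is correct and follows essentially the route the paper intends; the paper gives no separate argument and simply says the corollary follows immediately from the preceding lemma. That means the fundamental theorem of calculus along the string, differentiation under the integral using the removable singularity and $L=0$ for the integrand, and the fact that $-G^{B,2}(\cdot,\y+h\bv;\bv)$ is singular only on a ray at distance $\ge h$ from $\y$. Your explicit check that the ${\rm arg}$ jumps of the two terms cancel beyond $\y+h\bv$ is a detail the paper leaves implicit, and it holds because $\bv^\perp\cdot(\x-\y)=\bv^\perp\cdot(\x-\y-h\bv)$, so both terms switch branches on exactly the same set.
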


Similar results hold in three dimensions, though an extra derivative is required.
\begin{lemma}
The function 
\begin{align}
\bv \cdot \nabla_\y (\bv \cdot \nabla_\y G^{B,3})(\x,\y;\bv) &=\frac{1}{4 \pi \mu} \bv \cdot \nabla_\y \bv \cdot \nabla_\y \left[\frac{{\rm I}_3}{r} + \frac{\r \r^T}{r^3} \right] \nonumber\\
&\quad + \frac{1}{4 \pi \mu} \frac{1-\alpha}{\alpha} \left[\frac{{\rm I}_3}{r^3} -2 \frac{\bv\bv^T}{r^3}  -3\frac{Q_\bv \r \r^TQ_\bv}{r^5} \right.\nonumber\\
&\quad  \left. -3  (\r \cdot \bv) \frac{\r \bv^T - \bv \r^T-(\r\cdot\bv) \bv\bv^T}{r^5}\right]\label{eqn:G3nn},
\end{align}
has a removable singularity on the line $\{\x = \y + t \bv\,|\, t \in (0,\infty)\},$ and satisfies $L[\bv \cdot \nabla_\y\,\bv \cdot \nabla_\y G^{B,3}](\x) = 0$ for all $\x \neq \y.$     
\end{lemma}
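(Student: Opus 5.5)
The plan has three parts: compute the second directional derivative explicitly, check that the result has no line singularity, and deduce the PDE from the fact that $L$ commutes with $\y$-derivatives. Throughout we use $\r=\x-\y$, so $\bv\cdot\nabla_\y=-\bv\cdot\nabla_\r$ with $\bv$ held fixed.

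\textbf{Step 1: the Stokeslet part.} The term $2G^{S,3}=\frac{1}{4\pi\mu}\left[\frac{{\rm I}_3}{r}+\frac{\r\r^T}{r^3}\right]$ is smooth away from $\y$. Two $\y$-derivatives of it are simply carried along, which gives the first line of (\ref{eqn:G3nn}) with no further work.

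\textbf{Step 2: the first derivative of the correction.} Write the correction term as $\frac{1}{4\pi\mu}\frac{1-\alpha}{\alpha}F(\r)$, where
\begin{align*}
F(\r)=\frac{1}{R}{\rm I}_3+\frac{\r\bv^T-\bv\r^T-(\r\cdot\bv)\bv\bv^T}{rR}-\frac{Q_\bv\r\r^TQ_\bv}{rR^2}.
\end{align*}
The basic identities are
\begin{align*}
\bv\cdot\nabla_\r r=\frac{\r\cdot\bv}{r},\qquad \bv\cdot\nabla_\r R=\frac{\r\cdot\bv}{r}-1=-\frac{R}{r},\qquad \bv\cdot\nabla_\r(Q_\bv\r)=0.
\end{align*}
The second identity is the key one: each application of $\bv\cdot\nabla_\r$ to a power of $R$ turns a factor $1/R$ into $1/r$. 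Differentiating term by term:
\begin{itemize}
\item $\bv\cdot\nabla_\r(1/R)=1/(rR)$.
\item $\bv\cdot\nabla_\r\frac{1}{rR}=-\frac{\r\cdot\bv}{r^3R}+\frac{1}{r^2R}=\frac{1}{r^3}$, which is already free of $R$.
\item The last term differentiates to $-Q\r\r^TQ\left[-\frac{\r\cdot\bv}{r^3R^2}+\frac{2}{r^2R^2}\right]$.
\end{itemize}
After collecting terms, the result should be expressible through $\frac{1}{rR}$ and $\frac{1}{r^3}$ combinations.

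\textbf{Step 3: the second derivative and the absence of a line singularity.} Differentiating once more, the identities above should make every $R$ cancel. The mechanism is $|Q_\bv\r|^2=r^2-(\r\cdot\bv)^2=R(r+\r\cdot\bv)$: the matrix $Q_\bv\r\r^TQ_\bv$ supplies factors of $R$ that absorb the remaining denominators. What is left is the bracket in (\ref{eqn:G3nn}), a rational function of $\r$ with only powers of $r$ in the denominator. It is therefore smooth on $\mathbb{R}^3\setminus\{\y\}$, and the apparent singularity on the half-line is removable. This bookkeeping is the main obstacle. I would organize it by writing everything in the variables $s=\r\cdot\bv$ and $\mathbf{w}=Q_\bv\r$, so that $r^2=s^2+|\mathbf{w}|^2$ and $\partial_s$ is exactly $\bv\cdot\nabla_\r$, and I would check the final cancellation by verifying the result on the ray $\mathbf{w}=0$.

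\textbf{Step 4: the PDE.} $L$ acts in $\x$ and commutes with $\y$-derivatives. Since $L[G^{B,3}]=0$ off $\mathcal{L}_{\y,\bv}$, we get $L[\bv\cdot\nabla_\y\bv\cdot\nabla_\y G^{B,3}]=0$ off the closed half-line. The function is smooth across the open ray, so its image under $L$ is continuous there. Because the ray has empty interior, continuity forces $L[\bv\cdot\nabla_\y\bv\cdot\nabla_\y G^{B,3}]=0$ on the ray as well, and hence for all $\x\neq\y$. As an alternative check, the bracket in (\ref{eqn:G3nn}) can be recognized as a combination of second derivatives of $1/r$ and $\r\r^T/r^3$, each of which is annihilated by $L$.
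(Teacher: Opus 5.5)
Your route is the paper's: the paper gives no argument beyond direct calculation. You verify the explicit formula, read off removability from the fact that only powers of $r$ remain in the denominators, and obtain the PDE from $L[G^{B,3}]=0$ off $\mathcal{L}_{\y,\bv}$ together with continuity across the ray. The plan goes through. Steps 1 and 2 are correct, and so is the density argument in Step 4. For the commutation, it is cleanest to note that on functions of $\x-\y$ one has $\bv\cdot\nabla_\y=-\bv\cdot\nabla_\x$, a constant-coefficient operator that commutes with $L$ on the open set $\mathbb{R}^3\setminus\mathcal{L}_{\y,\bv}$. Two statements in the proposal are wrong, however.

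First, the cancellation in Step 3 does not come from $|Q_\bv\r|^2=R(r+\r\cdot\bv)$. That identity only shows that terms like $Q_\bv\r\r^TQ_\bv/R$ are bounded near the ray. Such terms are discontinuous there, so the identity cannot give removability. Likewise, ``checking on the ray $\mathbf{w}=0$'' is not a check, since the unsimplified expression has $R=0$ there. The cancellation is in fact exact, in the scalar coefficients. With $s=\r\cdot\bv$, your own identities give
\[
\bv\cdot\nabla_\r F=\frac{Q_\bv}{rR}+\frac{\r\bv^T-\bv\r^T-s\,\bv\bv^T}{r^3}-\frac{2r-s}{r^3R^2}\,Q_\bv\r\r^TQ_\bv .
\]
This is still singular on the ray: the first and last terms are both of size $|Q_\bv\r|^{-2}$ and do not cancel, which is why a second derivative is needed. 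Then $\bv\cdot\nabla_\r\frac{1}{rR}=\frac{1}{r^3}$ and
\[
\bv\cdot\nabla_\r\,\frac{2r-s}{r^3R^2}=\frac{r(2s-r)+(2r-s)(2r-3s)}{r^5R^2}=\frac{3(r-s)^2}{r^5R^2}=\frac{3}{r^5},
\]
so that
\[
(\bv\cdot\nabla_\r)^2F=\frac{Q_\bv-\bv\bv^T}{r^3}-\frac{3s\,(\r\bv^T-\bv\r^T-s\,\bv\bv^T)}{r^5}-\frac{3\,Q_\bv\r\r^TQ_\bv}{r^5}.
\]
Substituting $Q_\bv={\rm I}_3-\bv\bv^T$ gives exactly the stated bracket. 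This computation is what Step 3 should contain.

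Second, drop the ``alternative check'' at the end of Step 4, because it is false. $L$ annihilates neither the columns of ${\rm I}_3/r$ nor those of $\r\r^T/r^3$. It does not even annihilate the Stokeslet: $G^{S,3}$ is divergence-free, but $\mu\Delta G^{S,3}=\nabla p\neq 0$. So the two parts of the formula are not separately annihilated, and only the full combination $G^{B,3}$ satisfies $L[G^{B,3}]=0$. Your continuity argument is the one to use.
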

As for the two-dimensional case, the above lemma motivates the definition of a function, $K^{B,3},$ defined by integration of (\ref{eqn:G3nn}) over a finite line segment.
\begin{corollary}\label{col:g3}
Let $K^{B,3}$ be the function defined by
\begin{align}
\nonumber K^{B,3}(\x,\y;\bv,h) &= \int_{0}^h \int_{s}^h \bv \cdot \nabla_\y(\bv \cdot \nabla_\y G^{B,3}(\x,\y+t \bv;\bv))\,{\rm d}t\,{\rm d}s \\
&= \int_{0}^h  t\, \bv \cdot \nabla_\y(\bv \cdot \nabla_\y G^{B,3}(\x,\y+t \bv;\bv))\,{\rm d}t \label{eqn:Km3}\\
&= G^{B,3}(\x,\y;\bv) - G^{B,3}(\x,\y+h\bv;\bv)+h\, \bv \cdot \nabla_{\y}G^{B,3}(\x,\y+h \bv;\bv). \label{eqn:Km3_red}
\end{align}
Then $L[K^{B,3}]=0$ everywhere except the line segment $\ell_{\y,\bv,h}$ and $K^{B,3}(\x,\y;\bv,h)-G^{B,3}(\x,\y;\bv)$ is a smooth function of $\x$ in a neighborhood of $\y.$
\end{corollary}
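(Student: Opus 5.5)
The plan is to derive the three expressions for $K^{B,3}$ from one another by elementary calculus along the string, and then read the analytic properties off the middle form (\ref{eqn:Km3}). Set $g(t) = G^{B,3}(\x,\y+t\bv;\bv)$ for fixed $\x$ not on $\ell_{\y,\bv,h}$. The chain rule gives $g'(t) = \bv\cdot\nabla_\y G^{B,3}(\x,\y+t\bv;\bv)$ and $g''(t) = \bv\cdot\nabla_\y(\bv\cdot\nabla_\y G^{B,3})(\x,\y+t\bv;\bv)$. The direction argument $\bv$ is held fixed throughout, so only the source position moves.

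For the formulas, swap the order of integration in the double integral: the region $0\le s\le t\le h$ gives $\int_0^h t\,g''(t)\,{\rm d}t$, which is (\ref{eqn:Km3}). Integrating by parts then gives $h g'(h) - \int_0^h g'(t)\,{\rm d}t = h g'(h) - g(h) + g(0)$, which is (\ref{eqn:Km3_red}).

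\textbf{Justifying the calculus.} These manipulations need $g$ to be $C^2$ on $[0,h]$. The key geometric point is that $\x - (\y + t\bv)$ points back along the same line. If $\x \notin \{\y + t\bv : t\ge 0\}$, the half-lines $\mathcal{L}_{\y+t\bv,\bv}$ never contain $\x$, so $g$ is smooth in $t$. If instead $\x = \y + s\bv$ with $s>h$, then $\x$ lies on every half-line $\mathcal{L}_{\y+t\bv,\bv}$ with $t\le h$, and $g$ itself is singular there. In that case we still have $\x \ne \y + t\bv$, so (\ref{eqn:G3nn}) is finite by the preceding lemma (removable singularity), and we define $K^{B,3}$ through (\ref{eqn:Km3}).

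\textbf{The PDE.} For $\x\notin\ell_{\y,\bv,h}$, the integrand of (\ref{eqn:Km3}) is smooth in $\x$ locally uniformly in $t\in[0,h]$, because $|\x-\y-t\bv|$ is bounded below on compact neighbourhoods of $\x$. So $L$ can be passed under the integral, and the lemma's identity $L[\bv\cdot\nabla_\y\,\bv\cdot\nabla_\y G^{B,3}]=0$ off the source gives $L[K^{B,3}]=0$.

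\textbf{Local behaviour at $\y$.} For $\x$ near $\y$, use (\ref{eqn:Km3_red}):
\[
K^{B,3}-G^{B,3}(\x,\y;\bv) = -G^{B,3}(\x,\y+h\bv;\bv)+h\,\bv\cdot\nabla_\y G^{B,3}(\x,\y+h\bv;\bv).
\]
The singular set of these terms is $\mathcal{L}_{\y+h\bv,\bv}\cup\{\y+h\bv\}$, whose distance from $\y$ is $h>0$. Hence both terms are smooth in $\x$ on the ball $|\x-\y|<h$.

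The main obstacle is the degenerate configuration $\x$ on the line beyond $\y+h\bv$. There each term of (\ref{eqn:Km3_red}) is individually singular, while (\ref{eqn:Km3}) is regular. The cleanest approach is to take (\ref{eqn:Km3}) as the definition and establish (\ref{eqn:Km3_red}) on the dense open set off the full ray. Along this ray the combination in (\ref{eqn:Km3_red}) then has a removable singularity; this can be checked either by continuity of the integral form, or by directly expanding the $1/R$ terms and confirming that their singular parts cancel.
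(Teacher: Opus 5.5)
Your proposal is correct and follows the route the paper intends. The paper treats this corollary as immediate from the preceding lemma: integrate the regularized second derivative (\ref{eqn:G3nn}) along the string, use Fubini and integration by parts to obtain (\ref{eqn:Km3}) and (\ref{eqn:Km3_red}), pass $L$ under the integral, and read off smoothness near $\y$ from the far-endpoint terms. Your extra care on the ray beyond $\y+h\bv$ fills in a detail the paper leaves implicit: there the terms of (\ref{eqn:Km3_red}) are individually singular, so the integral form must serve as the definition (the paper only acknowledges this later, in its discussion of catastrophic cancellation).
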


It will also be convenient to have the expression for the stress tensors associated with $\bv \cdot \nabla_{\y} G^{B,3}$ and $\bv \cdot \nabla_{\y}\bv \cdot \nabla_{\y} G^{B,3}.$ Tedious but straightforward calculations yield
\begin{align*}
-\sigma[\bv \cdot \nabla_\y G^{B,3}(\cdot,\y,\bv)]_{ijk}(\x) &= -6\frac{\r_i\r_j \bv_k+\r_i \bv_j\r_k+\bv_i \r_j\r_k}{4 \pi r^5}+30\frac{\r_i \r_j \r_k (\r\cdot \bv)}{4 \pi r^7}\\
&+\frac{1}{4\pi }\frac{1-\alpha}{\alpha}\left\{-2\frac{(Q\r)_i(Q\r)_j \bv_k (R+2r)}{r^3R^3}\right.\\
&+ 2\frac{(Q\r)_i(Q\r)_j\r_k}{r^5R^3}(2r^2+3rR+3R^2)+2 \frac{Q_{i,j}\bv_k}{rR^2}\\
&-2\frac{R+r}{r^3R^2}(Q_{i,k}(Q \r)_j+Q_{j,k}(Q\r)_i+Q_{i,j}\r_k)\\
&\left.+2\frac{Q_{i,j}\bv_k}{r^3}-\frac{6 (\bv\cdot \r)\, Q_{i,j}\r_k}{r^5} \quad \right\},
\end{align*}
and
\begin{align*}
\sigma[\bv \cdot \nabla_\y\bv \cdot \nabla_\y &G^{B,3}(\cdot,\y,\bv)]_{ijk}(\x) = -12\frac{\r_i\bv_j \bv_k+\bv_i \r_j\bv_k+\bv_i \bv_j\r_k}{4 \pi r^5}+30\frac{\r_i \r_j \r_k }{4 \pi r^7}\\
&+ 60 \frac{(\r_i \r_j \bv_k + \r_i \bv_j \r_k + \bv_i\r_j\r_k)(\r\cdot \bv)}{4\pi r^7}-210 \frac{\r_i\r_j\r_k (\r\cdot\bv)^2}{4 \pi r^9}\\
&+\frac{1}{4\pi }\frac{1-\alpha}{\alpha}\left\{30 \frac{(Q \r)_i(Q \r)_j \r_k}{r^7}-6 \frac{Q_{i,k}(Q\r)_j+Q_{j,k}(Q\r)_i}{r^5}\right.\\
&\left.-12 \frac{Q_{i,j} \bv_k (\r\cdot \bv)}{r^5}-12 \frac{Q_{i,j} \r_k}{r^5}+30 \frac{Q_{i,j}\r_k \,(\r \cdot\bv)^2}{r^7} \right\},
\end{align*}
where, for ease of exposition, we have suppressed the dependence of $Q$ on $\bv.$

In the following, we define the functions $\Sigma^{B,2}$ and $\Sigma^{B,3}$ by 
$$\Sigma^{B,2}(\x,\y;h) := \n(\x) \cdot \sigma[K^{B,2}(\cdot,\y;\n(\y),h)](\x),$$ and 
$$\Sigma^{B,3}(\x,\y;h) := \n(\x) \cdot \sigma[K^{B,3}(\cdot,\y;\n(\y),h)](\x),$$
respectively, and choose $h = h(\y)$ to be a smooth non-vanishing function of $\y$ sufficiently small so that for each $\y \in \partial \Omega,$ the line segment $\ell_{\y,\n,h(\y)}$ only touches the surface $\partial \Omega$ at the single point $\y.$

We summarize the previous results in the following theorems.
\begin{theorem}\label{thm:string_rep23d}
    For a domain $\Omega$ with smooth boundary $\partial \Omega$ in $\mathbb{R}^d,$ $d=2,3,$ let $h:\partial \Omega \to \mathbb{R}^+$ be a smooth function such that for each $\y \in \partial \Omega,$ the intersection of the line segment $\ell_{\y,\n(\y),h(\y)}$ with $\partial \Omega$ is $\{\y\}.$ For $\rho \in L^2(\partial \Omega; \mathbb{R}^d),$
\begin{align*}
\lim_{\delta\to 0^+} \n_i(\x) \sigma&\left[\int K^{B,d}(\cdot,\y;\n(\y),h(\y))\rho(\y)\,{\rm d}S(\y)\right]_{ijk}(\x-\delta \n)\\
&=\rho_{j}(\x) + \int \Sigma^{B,d}_{jk}(\x,\y;h(\y)) \,\rho_k(\y)\,{\rm d}S(\y),
\end{align*}
in an $L^2$ sense. Moreover, the integral operator on the right-hand side is compact from $L^2(\partial \Omega;\mathbb{R}^d)$ to itself.
\end{theorem}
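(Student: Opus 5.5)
We split the kernel $K^{B,d}(\x,\y;\n(\y),h(\y))$ with the bump function $\chi_{\beta'}$ into a near-field piece and a far-field piece. Write
\[
K^{B,d}(\x,\y;\n(\y),h(\y)) = G^{B,d}(\x,\y;\n(\y))\,\chi_{\beta'}(|\x-\y|) + E(\x,\y),
\]
where $E := K^{B,d} - G^{B,d}\chi_{\beta'}$. The traction of the first term decomposes further by the definition of $W^d$:
\[
\sigma[G^{B,d}] = 2T^{S,d} + W^d .
\]
We handle the three resulting contributions separately.

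For the $2T^{S,d}$ contribution we apply Theorem~\ref{thm:gs23_lim}. The factor $2$ turns the jump $\tfrac12\rho_j$ into $\rho_j$, and leaves a compact principal-value-free operator. Two points need checking. First, the cutoff $\chi_{\beta'}$ only multiplies the kernel by a smooth function equal to one near the diagonal. Second, the derivatives of $\chi_{\beta'}$ produced by $\sigma$ are supported where $|\x-\y|\ge \beta'/2$. Both corrections therefore have smooth kernels, so they pass to the limit and are compact. For the $W^d$ contribution (nontrivial only when $d=3$), Theorem~\ref{thm:g2_lim} gives the limit directly, with a compact limiting operator. The cutoff-derivative terms from $\sigma$ acting on $\chi_{\beta'}$ are again treated as above.

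The remaining term $E$ is the main work. By Corollaries~\ref{col:g2} and \ref{col:g3},
\[
E(\x,\y) = \bigl(K^{B,d}-G^{B,d}\bigr) + G^{B,d}\bigl(1-\chi_{\beta'}\bigr).
\]
Near the diagonal, $1-\chi_{\beta'}$ vanishes and $K^{B,d}-G^{B,d}$ is smooth in $\x$. Using the explicit formulas \eqref{eqn:Km3_red} and its 2D analogue, $E$ equals $-G^{B,d}(\x,\y+h\bv;\bv)$ plus (for $d=3$) $h\,\bv\cdot\nabla_\y G^{B,3}(\x,\y+h\bv;\bv)$, plus the far-field part of $G^{B,d}$. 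Away from the diagonal, $E$ is singular only on the segment $\ell_{\y,\n(\y),h(\y)}$, which by hypothesis meets $\partial\Omega$ only at $\y$.

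The key claim is that $\sigma_\x[E](\x,\y)$ is jointly smooth, or at least bounded and continuous, for $\x$ in a one-sided tubular neighbourhood $\{\x'-\delta\n(\x') : \x'\in\partial\Omega,\ 0\le\delta<\delta_0\}$ and $\y\in\partial\Omega$. This would follow from three facts. First, the segments have length at least $\min h>0$ from $\y$ and never return to $\overline{\Omega}$. Second, compactness of $\partial\Omega$ and smoothness of $h$ give a uniform positive distance between $\ell_{\y,\n(\y),h(\y)}\setminus B_{\beta'/2}(\y)$ and the closed tubular neighbourhood. Third, inside $B_{\beta'/2}(\y)$ the difference $K^{B,d}-G^{B,d}$ is smooth by the corollaries.

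Given this, dominated convergence together with the boundedness of $\partial\Omega$ yields the $L^2$ limit. The limiting operator has a continuous kernel and is therefore Hilbert–Schmidt, hence compact. Adding the three pieces gives the identity plus the operator with kernel $\Sigma^{B,d}$, which is compact.

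The main obstacle is making the uniform-separation claim rigorous near $\y$. There the segment starts on the boundary, so points of $\ell$ close to $\y$ are close to boundary targets. The smoothness statement of the corollaries is only local in $\x$. I would first try to resolve this by showing that the singular set of $K^{B,d}-G^{B,d}$ is $\{\y+t\bv: t\ge h\}$ for the far endpoint terms. In 3D this singular set consists of the half-line from $\y+h\bv$, together with $\mathcal L_{\y,\bv}$ cancelling. Since that set starts at distance $h(\y)$ from $\y$, it is bounded away from $\overline\Omega$ near $\y$ uniformly in $\y$.
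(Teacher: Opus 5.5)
Your proposal is correct and follows essentially the same route as the paper. You split the traction kernel into three pieces: $2T^{S,d}$ (handled by Theorem~\ref{thm:gs23_lim}), the cut-off term $W^d\chi_{\beta'}$ (handled by Theorem~\ref{thm:g2_lim}), and a remainder that is smooth on $\overline{\Omega}$ uniformly in $\y$ by Corollaries~\ref{col:g2}--\ref{col:g3} and the string hypothesis.

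The differences are only cosmetic. The paper applies the cutoff directly to the stress kernel, writing $\sigma[K^{B,d}] = 2T^{S,d} + W^d\chi_{\beta'} + \mathcal{R}$, so no cutoff-derivative terms arise. It also simply asserts the near-diagonal smoothness of $\sigma[K^{B,d}]-\sigma[G^{B,d}]$, which you justify via the far-endpoint half-line $\{\y+t\n(\y): t\ge h(\y)\}$.
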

\begin{proof}
    {By construction, for $\delta >0$ sufficiently small, $K^{B,d}(\cdot,\y;\n(\y),h(\y))$ is smooth in a neighborhood of $\x-\delta \n(\x)$ for every $\x \in \partial \Omega$ and $\y \in \partial \Omega$ and hence we are free to interchange differentiation and integration.} From Corollary \ref{col:g2} in 2D and Corollary \ref{col:g3} in 3D, it is clear that $\sigma [K^{B,d}(\x,\y,\n(\y),h(\y))] - \sigma[G^{B,d}(\x,\y,\n(\y))]$ is smooth for $\x \in {\overline{\Omega}}$ sufficiently close to $\y \in \partial \Omega.$ Moreover, {since $\ell_{\y,\n(\y),h(\y)}$ meets $\overline{\Omega}$ only at $\y,$ $\sigma [K^{B,d}(\x,\y,\n(\y),h(\y))]$ and $T^{S,d}$} are smooth in {$\overline{\Omega}$} outside of any ball containing $\y.$ Next we write
    {\begin{align*}
    \sigma[K^{B,d}\,]&= 2\,T^{S,d} + W^{d} \chi_{{\beta}'}(|\x-\y|) + (\sigma[K^{B,d}] - 2T^{S,d}- W^{d} \chi_{{\beta}'}(|\x-\y|))\\
    &= 2T^{S,d} + W^d \chi_{{\beta}'}(|\x-\y|) + (\sigma[K^{B,d}] - \sigma[G^{B,d}] )\chi_{{\beta}'}(|\x-\y|)\\
    & \quad +(\sigma[K^{B,d}]- 2T^{S,d}) (1 - \chi_{\beta'}(|\x - \y|)) \\
&=2T^{S,d} + W^{d} \chi_{{\beta}'}(|\x-\y|)+ \mathcal{R}.
    \end{align*}}
    By construction, both terms comprising $\mathcal{R}$ are smooth in $\overline{\Omega}$ {uniformly in $\y \in \partial \Omega.$} It follows immediately that
    {\begin{align*}
    \lim_{\delta\to 0^+}  &\int \n(\x) \cdot\mathcal{R}(\x-\delta \n(\x),\y)\rho(\y)\,{\rm d}S(\y)\\
&=\int \n(\x)\cdot \mathcal{R}(\x,\y) \,\rho(\y)\,{\rm d}S(\y).
\end{align*}
    Applying Theorem \ref{thm:gs23_lim} to $T^{S,d}$ and Theorem \ref{thm:g2_lim} to $W^{d}$ gives the required result.}
\end{proof}
We collect the preceding results in the following theorem.
\begin{theorem}
\label{thm:main_res1}
Let $\Omega$ be a domain in $\mathbb{R}^d,$ $d=2,3$ with smooth boundary $\partial \Omega.$ For a right-hand side $f \in L^2(\partial \Omega; \mathbb{R}^d),$ if we introduce the ansatz
$$u_{j}(\x) = \int_{\partial \Omega} K^{B,d}_{jk}(\x,\y;\n(\y),h(\y))\,\rho_k(\y)\,{\rm d}S({\y}),$$
then the density $\rho$ satisfies the integral equation
\begin{align}
\rho_j(\x) + \int_{\partial \Omega} \Sigma^{B,d}_{jk}(\x,\y;h(\y))\rho_k(\y)\,{\rm d}S(\y) = f_j(\x).
\label{eqn:int_eq_23d}
\end{align}
Moreover, (\ref{eqn:int_eq_23d}) is a Fredholm integral equation of the second kind.
\end{theorem}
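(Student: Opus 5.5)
The plan is to assemble the statement from Corollaries \ref{col:g2}--\ref{col:g3} and Theorem \ref{thm:string_rep23d}. There are three things to check in order: the ansatz solves the PDE in $\Omega$, its boundary traction gives the left-hand side of (\ref{eqn:int_eq_23d}), and the operator there is identity plus compact.

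First, since $h(\y)$ is chosen so that the segment $\ell_{\y,\n(\y),h(\y)}$ meets $\partial\Omega$ only at $\y$ and points outward, the segment lies in the complement of $\Omega$. Away from $\ell_{\y,\n(\y),h(\y)}$ the kernel $K^{B,d}(\x,\y;\n(\y),h(\y))$ is smooth in $\x$ and satisfies $L[K^{B,d}]=0$ by the corollaries. For $\x$ in a compact subset of $\Omega$ this holds uniformly in $\y\in\partial\Omega$, so we may differentiate under the integral sign. This gives $L[u]=0$ in $\Omega$ for any $\rho\in L^2(\partial\Omega;\mathbb{R}^d)$. For exterior domains we flip the normal, as remarked after the ansatz, and we note that $K^{B,d}$ decays at infinity because it is a difference of $G^{B,d}$ evaluated at two nearby sources (plus a dipole term in 3D).

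Second, we identify the boundary traction. Theorem \ref{thm:string_rep23d} states that the nontangential ($L^2$) limit of $\n(\x)\cdot\sigma[u](\x-\delta\n(\x))$ as $\delta\to0^+$ equals $\rho_j+\int\Sigma^{B,d}_{jk}\rho_k\,{\rm d}S$. Imposing the boundary condition (\ref{eqn:bc}) in this limit sense yields (\ref{eqn:int_eq_23d}) with data $f\in L^2$.

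Third, we establish the second-kind structure. Theorem \ref{thm:string_rep23d} already gives that the operator $\rho\mapsto\int\Sigma^{B,d}(\cdot,\y;h(\y))\rho(\y)\,{\rm d}S(\y)$ is compact on $L^2(\partial\Omega;\mathbb{R}^d)$. The equation therefore has the form $(I+\mathcal{K})\rho=f$ with $\mathcal{K}$ compact, which is Fredholm of the second kind (index zero) by Riesz theory.

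The coefficient must be exactly $\rho_j$, and I would verify this explicitly as a consistency check. The decomposition in Theorem \ref{thm:string_rep23d} contributes $2T^{S,d}$, whose jump is $2\cdot\tfrac12\rho$ by Theorem \ref{thm:gs23_lim}. The remainder $W^d$ and the smooth part $\mathcal{R}$ produce no jump by Theorem \ref{thm:g2_lim}.

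The main obstacle is interpretive rather than technical. The statement "$\rho$ satisfies" should be read as: $u$ solves the boundary value problem (\ref{eqn:elasto})--(\ref{eqn:bc}), with traction understood as an $L^2$ boundary limit, if and only if $\rho$ solves (\ref{eqn:int_eq_23d}). I would state this equivalence explicitly.

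Solvability or uniqueness of (\ref{eqn:int_eq_23d}) is not claimed here; that would require a separate nullspace argument, for instance compatibility with rigid motions in the interior problem. I would remark on this but not prove it.
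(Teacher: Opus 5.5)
Your proposal is correct and takes the same approach as the paper. The paper gives no separate proof: it obtains the theorem by collecting Corollaries \ref{col:g2}--\ref{col:g3}, which show the ansatz satisfies $L[u]=0$ in $\Omega$ because the strings lie outside $\Omega$, and Theorem \ref{thm:string_rep23d}, which gives the $L^2$ jump relation and shows the equation is identity plus compact. The extra details you supply (differentiation under the integral sign, decay for exterior problems, and reading ``satisfies'' as equivalence with the boundary value problem) are left implicit there; one small correction is that the remainder $\mathcal{R}$ produces no jump because it is smooth, not by Theorem \ref{thm:g2_lim}.
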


\begin{remark}
Recall that the traction boundary value problem in elastostatics has a finite-dimensional nullspace corresponding to rigid body motions. The dimensionality of the nullspace is $3$ in two dimensions, and $6$ in three dimensions, which is inherited by equation~{(}\ref{eqn:int_eq_23d}{)}. These nullspaces can be eliminated by enforcing that the density $\rho$ satisfy $3$ additional constraints in two dimensions and $6$ additional constraints in three dimensions, as is done for mobility and interior traction problems in Stokes flow, see~\cite{elastance-ref,corona2017integral}, for example.
\label{rem:null}
\end{remark}

\subsection{Limiting behavior of the integral equations}
Here we analyze the incompressible limit corresponding to taking $\lambda \to \infty$, or equivalently, $\alpha \to 1^-,$ with $\mu$ fixed. We start with the two-dimensional case. In this limit, for any $\x$ not on the half-line emanating from $\y$ in the direction $\n,$ we have
$\lim_{\alpha \to 1} G^{B,2}(\x,\y;\bv) =2\, G^{S,2}(\x,\y).$ Turning to the normal stress, {$W^2 \equiv 0,$ i.e. $\bv \cdot \sigma[G^{B,2}] = 2 \bv_i T^{S,2}_{ijk}$ for all $\alpha.$} In particular, the integral operator in the 2D integral equation (\ref{eqn:int_eq_23d}) is independent of $\alpha.$ Explicitly, (\ref{eqn:int_eq_23d}) can be written as
\begin{align}
\frac{1}{2}\rho_i(\x) &- \frac{1}{\pi}\int_{\partial \Omega}\n_j(\x)\frac{(x_i-y_i)(x_j-y_j)(x_k-y_k) }{|\x-\y|^4}\,\rho_k(\y)\,{\rm d}S(\y) \\
&= \frac{1}{2}f_i(\x) - \frac{1}{ \pi}\int_{\partial \Omega}\n_j(\x)\frac{(x_i-y_i')(x_j-y_j')(x_k-y_k') }{|\x-\y'|^4}\,\rho_k(\y)\,{\rm d}S(\y),
\end{align}
with $\y' = \y + h(\y) \n(\y).$ The left-hand side of the previous equation is just the integral equation for solving the traction boundary value problem when the velocity is represented using the single layer potential for Stokes flow in two dimensions, while the integral operator on the right-hand side has a smooth kernel and so is compact. Hence, for any $\alpha,$ (\ref{eqn:int_eq_23d}) is a compact perturbation of the surface  traction integral equation and moreover, if (\ref{eqn:int_eq_23d}) is solvable for one $\alpha$ then it is solvable for every $\alpha,$ (in appropriate spaces which handle the non-uniqueness corresponding to rigid body motions, see Remark~\ref{rem:null}). 

Next we turn to 3D. Suppose that $\x$ is not on the half-line emanating from $\y$ and pointing in the direction $\n.$ It follows immediately from the definition of $\Sigma^{B,3}$ and Theorem \ref{thm:string_rep23d} that
{\begin{align*}
    \Sigma^{B,3}_{jk}(\x,\y,h(\y)) & = 2 \n_i(\x)\left[T^{S,3}_{ijk}(\x,\y)-T^{S,3}_{ijk}(\x,\y')+h(\y)\n(\y)\cdot \nabla_{\y'}T^{S,3}_{ijk}(\x,\y') \right]\\[5pt]
    &  +(1-\alpha) \hat{K}_{j,k}(\x,\y),
\end{align*}}
where $\hat{K}_{j,k}$ is a kernel corresponding to a compact operator. In particular, (\ref{eqn:int_eq_23d}) is a compact perturbation of the integral equation arising in the traction boundary value problem when the velocity is represented using the Stokes layer potential, and in the limit as $\alpha \to 1^{-}$ the integral operator converges to{\begin{align}\label{eqn:3d_stokes_limit}
\rho_i(\x) &- \frac{3}{2\pi}\int_{\partial \Omega}\n_j(\x)\frac{(x_i-y_i)(x_j-y_j)(x_k-y_k) }{|\x-\y|^5}\,\rho_k(\y)\,{\rm d}S(\y)  \\
&= f_i(\x) - \frac{3}{ 2\pi }\int_{\partial \Omega}\n_j(\x)\frac{(x_i-y_i')(x_j-y_j')(x_k-y_k') }{|\x-\y'|^5}\,\rho_k(\y)\,{\rm d}S(\y)\nonumber\\
&\quad\quad +\frac{3}{ 2\pi }\int_{\partial \Omega}h(\y)\n_j(\x)\n_\ell(\y)\partial_{y'_\ell}\left[\frac{(x_i-y_i')(x_j-y_j')(x_k-y_k') }{|\x-\y'|^5}\right]\,\rho_k(\y)\,{\rm d}S(\y)\nonumber.
\end{align}}
In particular, if (\ref{eqn:3d_stokes_limit}) is invertible then (\ref{eqn:int_eq_23d}) will be invertible for all $\alpha$ sufficiently close to 1.

\section{Numerical apparatus}\label{sec:num_app}
In this section we briefly sketch relevant details for the discretization and solution of the integral equation (\ref{eqn:int_eq_23d}). A nice feature of our approach is that only minor modifications are required in order to use standard integral equations toolboxes, chunkIE \cite{chunkie} in two dimensions and fmm3DBIE \cite{fmm3dbie} in three dimensions. Moreover, the kernels used in both our integral representation and the corresponding integral equations can be applied quickly via a small extension to the standard fast multipole method toolboxes \cite{fmm3d}.

\subsection{Evaluation of the kernels}
In two dimensions, the kernel $\Sigma^{B,2}(\cdot,\y;h(\y))$ is only singular at $\y$ and $\y + h(\y) \n(\y).$ Indeed, combining the identity (\ref{eqn:sigg2}) with Corollary \ref{col:g2}, we obtain
$$\Sigma^{B,2}(\x,\y,h(\y)) = -2\frac{(\r\cdot \n(\x))r_jr_k}{\pi r^4}+2\frac{(\r'\cdot\n(\x))r'_jr'_k}{\pi (r')^4},$$
with $\r' = \x-\y -h \n(\y).$ By construction, for a given geometry we can choose $h(\y)$ so that $r'=|\r'|$ is bounded away from zero. In particular, the second term can be stably evaluated with minimal loss of accuracy due to catastrophic cancellations. The first term arises in other boundary integral equations, see \cite{pozrikidis1992boundary} for example, and can be evaluated using standard methods. Indeed, we observe that if $\partial \Omega$ is smooth, and $\r(t)$ is an arclength parameterization of $\partial \Omega,$ then $\r(t)-\r(s) = (t-s)\tau(t) + \frac{\kappa}{2}(t-s)^2 \n(t) +O((t-s)^3).$ Then, the first term in the kernel becomes
$$-\kappa \frac{r_jr_k}{\pi r^2} +O(|\r|),$$
for $\x,\y \in \partial \Omega.$ Using this approximation for $|\x-\y|$ small enough allows the kernel to be stably evaluated. In principle, higher order expansions can be used --- these involve evaluating higher order derivatives of the normal vector.

For $K^{B,2},$ similar reasoning applies. In principle, for $\x \approx \y + t \n(\y)$ with $t>0$ sufficiently large, one will get catastrophic cancellation involving the difference of the kernels. In practice, this will not be significant except when evaluating points very far away, where other formulas can be used.

In three dimensions, singularities are present on the line segments $\ell_{\y,\n,h}$ both in the integral equation kernel $\Sigma^{B,3}(\cdot,\y;h(\y))$ and the kernel used in the representation $K^{B,3}.$ In particular, if (\ref{eqn:Km3_red}) is used to calculate both $\Sigma^{B,3}$ and $K^{B,3}$ then when $\x \approx \y + t \n(\y)$ with $t>h(\y),$ catastrophic cancellations will occur, even though the kernels are smooth and decaying in $|\x-\y|$. To avoid this issue, for $|\x-\y|>h(\y)$  we instead use the integral representation of $K^{B,3}$ given in (\ref{eqn:Km3}). From (\ref{eqn:G3nn}) we note that the integrand is smooth for $\x$ away from $\ell_{\y,\n(\y),h(\y)}.$ It follows that the integral in (\ref{eqn:Km3}) may be computed using a standard Gauss-Legendre quadrature with an order which depends only logarithmically on the desired accuracy.

\subsection{Discretization of the integral equations} 
As mentioned above, the boundary integral equation (\ref{eqn:int_eq_23d}) is amenable to standard discretization and quadrature methods. Here we use the software package chunkIE \cite{chunkie} in two dimensions, and fmm3DBIE \cite{fmm3dbie} in three dimensions, which discretize and solve integral equations using a patch-based collocation scheme \cite{greengard2021fast}. In particular, both the geometry and unknowns are represented using piecewise polynomial expansions. For curves in two dimensions these quantities are interpolated from Gauss-Legendre quadrature nodes, while for surfaces in three dimensions they are interpolated from Vioreanu-Rokhlin nodes~\cite{vioreanu_2014,xiao2010numerical}. High-order quadratures for weakly-singular integral operators are constructed via \emph{generalized Gaussian quadrature} \cite{ggq1,ggq2,ggq3}.

\subsection{Fast algorithms} As is typical for boundary integral equations, discretizing (\ref{eqn:int_eq_23d}) yields a
dense linear system. In either case, however, both the application of these matrices and their inversion can be
accelerated using existing machinery with only minor modifications. Here we briefly sketch the necessary observations, deferring a complete 3D implementation to a subsequent paper.

In two dimensions, the kernel appearing in the integral equation differs from the traction of the 2D Stokes single-layer by only a smooth function (which can itself be written as the traction of the Stokes single-layer from a suitable source). Thus, only two Stokeslet evaluations are required per source. Existing fast
iterative solvers based on the 2D Stokes FMM~\cite{fmm2d}, and
fast direct solvers based on hierarchical
skeletonization~\cite{martinsson2005fast,greengard2009fast,sushnikova2023fmm,minden2017recursive,corona2015n},
therefore apply essentially unchanged.

In three dimensions, Corollary~\ref{col:g3} expresses each entry of
$\Sigma^{B,3}(\x,\y;h)$ as a line integral over $\ell_{\y,\n,h}$ of a
linear combination of terms
\begin{equation}\label{eqn:gen_form}
f(\x)\, g(\y)\, \partial_{\x}^{{\gamma}} \frac{1}{|\x - \y|},
\qquad |{\gamma}| \le 3,
\end{equation}
with $f, g$ explicit scalar functions. For $|\x - \y|$ large compared
to $h$, the line integral is captured to high precision by a
fixed-order Gauss–Legendre rule (order 16 in our experiments),
reducing each far-field interaction to a sum of terms of the form
(\ref{eqn:gen_form}). Kernels of this type can be applied quickly using the
standard 3D Laplace FMM~\cite{GREENGARD1987325,fmm3d}.
For $|\x - \y|$ comparable to or smaller than $h$, the kernel is
evaluated directly via (\ref{eqn:Km3_red}) together with standard
near-singular and self-interaction quadratures. This yields an $O(N)$
algorithm for applying the system matrix, provided  the strings are short
compared with the FMM leaf-box size, a condition enforceable by
sufficient tree refinement. An analogous constraint on string endpoints
applies to hierarchical direct solvers, where long strings can destroy
the low-rank structure of far-field blocks. Perhaps the most closely related
existing construction is~\cite{gimbutasFastMultipoleMethod2016}, which
treats the case of infinite, uniformly vertical strings.

\subsection{Heuristics for the choice of $h$} 
In two dimensions there is additional flexibility which greatly simplifies matters. For ease of exposition we focus on the interior problem for a simply connected domain $\Omega,$ though an almost identical approach can be used for exterior problems and non-simply connected $\Omega.$ For fixed $\x$ and $\y$ we observe that
$K^{B,2}(\x,\y;\n,h)$ has a branch cut $c$ along $\ell_{\y,\n,h}$ connecting $\y$ to $\y_s=\y+h\n.$ The remainder of the kernel is smooth in $\x$ except at $\y$ and $\y_s.$ 

Clearly, one is free to continuously deform $c$ as long as it does not pass through $\x.$ Let $\y_*$ be a point in $\Omega^c$ connected to $\y$ and $\y_s$ by smooth curves $c',c'' \subset \overline{\Omega^c},$ respectively, such that $c'\cup c''$ is continuously deformable to $\ell_{\y,\n,h}$ while remaining in $\Omega^c.$ Then 
\begin{align*}
K^{B,2}(\x,\y;\n,h) &= \big(G^{B,2}(\x,\y;c')-G^{B,2}(\x,\y_*;c')\big)\\[5pt]
&-\big(G^{B,2}(\x,\y_*;c'')-G^{B,2}(\x,\y_s;c'')\big),
\end{align*}
where the branch cut in the first term lies along $c'$ and the branch cut in the second term lies along $c''.$ Here, with some abuse of notation, we replace the vector $\n$ in the arguments of $G^{B,2}$ with the curve along which the branch cut lies.

We note that the second term satisfies the homogeneous PDE in $\Omega,$ and has smooth normal stress on $\partial \Omega.$ Hence, we are free to replace $K^{B,2}$ in our representation with 
$$K^{B,2}_{\y_*}:=G^{B,2}(\x,\y;c')-G^{B,2}(\x,\y_*;c'),$$ 
where the branch cut in $G^{B,2}$ lies along $c'$ {(see Figure~\ref{fig:Kystar}).}
\begin{figure}[h]
\centering
\begin{tikzpicture}[scale=1.0,line cap=round]
  \def\cavpath{plot[smooth cycle,tension=0.55] coordinates {
    (1.895,0.000)  (1.861,0.376)  (1.762,0.740)  (1.601,1.081)
    (1.384,1.389)  (1.117,1.655)  (0.811,1.872)  (0.476,2.036)
    (0.121,2.144)  (-0.243,2.197) (-0.605,2.196) (-0.957,2.146)
    (-1.290,2.051) (-1.600,1.920) (-1.880,1.759) (-2.127,1.578)
    (-2.341,1.384) (-2.521,1.186) (-2.667,0.991) (-2.783,0.807)
    (-2.870,0.640) (-2.931,0.495) (-2.969,0.375) (-2.986,0.284)
    (-2.986,0.225) (-2.969,0.197) (-2.938,0.201) (-2.891,0.236)
    (-2.829,0.300) (-2.751,0.390) (-2.656,0.503) (-2.542,0.634)
    (-2.408,0.779) (-2.252,0.931) (-2.073,1.086) (-1.872,1.236)
    (-1.648,1.376) (-1.403,1.498) (-1.140,1.596) (-0.862,1.664)
    (-0.574,1.698) (-0.282,1.694) (0.008,1.649)  (0.288,1.562)
    (0.550,1.433)  (0.788,1.264)  (0.994,1.059)  (1.161,0.824)
    (1.285,0.563)  (1.360,0.286)  (1.386,0.000)  (1.360,-0.286)
    (1.285,-0.563) (1.161,-0.824) (0.994,-1.059) (0.788,-1.264)
    (0.550,-1.433) (0.288,-1.562) (0.008,-1.649) (-0.282,-1.694)
    (-0.574,-1.698)(-0.862,-1.664)(-1.140,-1.596)(-1.403,-1.498)
    (-1.648,-1.376)(-1.872,-1.236)(-2.073,-1.086)(-2.252,-0.931)
    (-2.408,-0.779)(-2.542,-0.634)(-2.656,-0.503)(-2.751,-0.390)
    (-2.829,-0.300)(-2.891,-0.236)(-2.938,-0.201)(-2.969,-0.197)
    (-2.986,-0.225)(-2.986,-0.284)(-2.969,-0.375)(-2.931,-0.495)
    (-2.870,-0.640)(-2.783,-0.807)(-2.667,-0.991)(-2.521,-1.186)
    (-2.341,-1.384)(-2.127,-1.578)(-1.880,-1.759)(-1.600,-1.920)
    (-1.290,-2.051)(-0.957,-2.146)(-0.605,-2.196)(-0.243,-2.197)
    (0.121,-2.144) (0.476,-2.036) (0.811,-1.872) (1.117,-1.655)
    (1.384,-1.389) (1.601,-1.081) (1.762,-0.740) (1.861,-0.376)
  }}
  \fill[cyan!8] \cavpath;
  \draw[thick]  \cavpath;

  \coordinate (y)  at (-2.706, 0.444);
  \coordinate (yh) at (-1.317,-0.726);
  \coordinate (ys) at (-3.600, 0.000);

  \draw[thick] (y) -- (yh);

  \draw[red,thick,->]
    (y)        .. controls (-1.5, 0.0) and (2.0,-0.5) .. (1.0,-0.5)
               .. controls (0.0,-0.5) and (-3.0,-0.05) .. (ys);

  \draw[blue,very thick,dashed,opacity=0.55,->]
    (yh) .. controls (-1.6,-0.4) and (-2.8,-0.05) .. (ys);

  \fill (y)  circle (0.06);
  \fill (yh) circle (0.06);
  \fill (ys) circle (0.06);

  \node[font=\small] at (-2.55,0.20)  {$\y$};
  \node[font=\small] at (-2.20,0.55)  {$\n(\y)$};
  \node[anchor=north west,font=\small,inner sep=3pt] at (yh)
        {$\y + h\,\n(\y)$};
  \node[anchor=east,font=\small,inner sep=3pt] at (ys) {$\y_*$};

  \node[font=\small] at (1.65, 0.0) {$\Omega$};
  \node[font=\small] at (-3.4, 1.9) {$\Omega^c$};
\end{tikzpicture}
\caption{The contour-deformation kernel $K^{B,2}_{\y_*}$ on a smooth
cavity domain $\Omega.$ The black
segment is the original truncated string from $\y$ to $\y + h\,\n(\y)$,
on which $K^{B,2}(\cdot,\y;\n,h)$ has its line singularity. The solid
red contour is a smooth deformation from $\y$ to a point
$\y_* \in \Omega^c$, sweeping through the cavity and out the opening
of $\Omega$; this defines the modified kernel
$K^{B,2}_{\y_*}(\x,\y) = G^{B,2}(\x,\y;\n(\y)) -
G^{B,2}(\x,\y_*;\n(\y))$. The dashed contour from $\y + h\,\n(\y)$ to
$\y_*$ is the auxiliary curve used to identify the difference
$K^{B,2}_{\y_*} - K^{B,2}$ as an integral that is regular in $\Omega$.}
\label{fig:Kystar}
\end{figure}

In three dimensions, the optimal choice of $h$ remains something of an open problem. On one hand, if $h$ is chosen too small, then the singularities from each end of the string almost cancel, and the condition number deteriorates. On the other hand, if $h$ is too large then the singularities associated with the far end of the string will lead to nearly singular interactions with other parts of the boundary, requiring finer discretization and possibly again increasing the condition number. 
In all of the examples considered in this work, we use a fixed string length.

\section{Numerical illustrations}\label{sec:num_ill}
To illustrate the performance of our approach, we consider the solution of the PDE in the interior of a star-shaped geometry ($\Omegastar$), and a cavity-like ($\Omegacavity$) domain in two dimensions, and an ellipsoid ($\Omegae$), and a star-shaped torus ($\Omegat$) in three dimensions. The parametrization of the star-shaped geometry denoted by $\gammastar$ is given by
$$
\gammastar(t) = \big(1 + 0.3\cos{(5t)}\big)\begin{bmatrix}\cos{(t)} \\ \sin{(t)}\end{bmatrix} \,, \quad t\in[0,2\pi) \,. 
$$
For the cavity domain, suppose $z(t) = x(t) + iy(t)$, $t\in [0,2\pi)$ is an ellipse centered at $(20,0)$ with semi-major and minor axis $(a,b)$. Then $$\gammacavity(t) = \frac{z^{2\zeta}(t)}{\max_{t\in [0,2\pi)}(|z(t)|^{2\zeta})}\,,$$ with $(a,b,\zeta) = (0.397,8.02,3.965)$. The ellipsoid in three dimensions is coordinate-axis aligned with principal semi-axes $(5.1,1.0,2.0)$ respectively. Finally, the parametrization for the star-shaped torus denoted by $\gammat$ is given by
$$
\gammat(u,v) = \begin{bmatrix}
\big(R_{1} + R_{3} \cos{(3v)} + R_{2}\cos{(u)}\big) \cos{(v)} \\
\big(R_{1} + R_{3} \cos{(3v)} + R_{2}\cos{(u)}\big) \sin{(v)} \\
R_{2} \sin{(u)}  
\end{bmatrix}\, ,
$$
with $(u,v) \in [0,2\pi)^2$, $R_{1} = 4$, $R_{2} = 1.5$, and $R_{3} = 0.25$. We plot the star-shaped domain, the cavity and the star-shaped torus in Figure~\ref{fig:geom}. Unless stated otherwise, the string length $h$ is $0.1$ for examples in two dimensions, and $0.25$ for the examples in three dimensions.

\begin{figure}[!htbp]
\begin{center}
\includegraphics[width=0.7\linewidth]{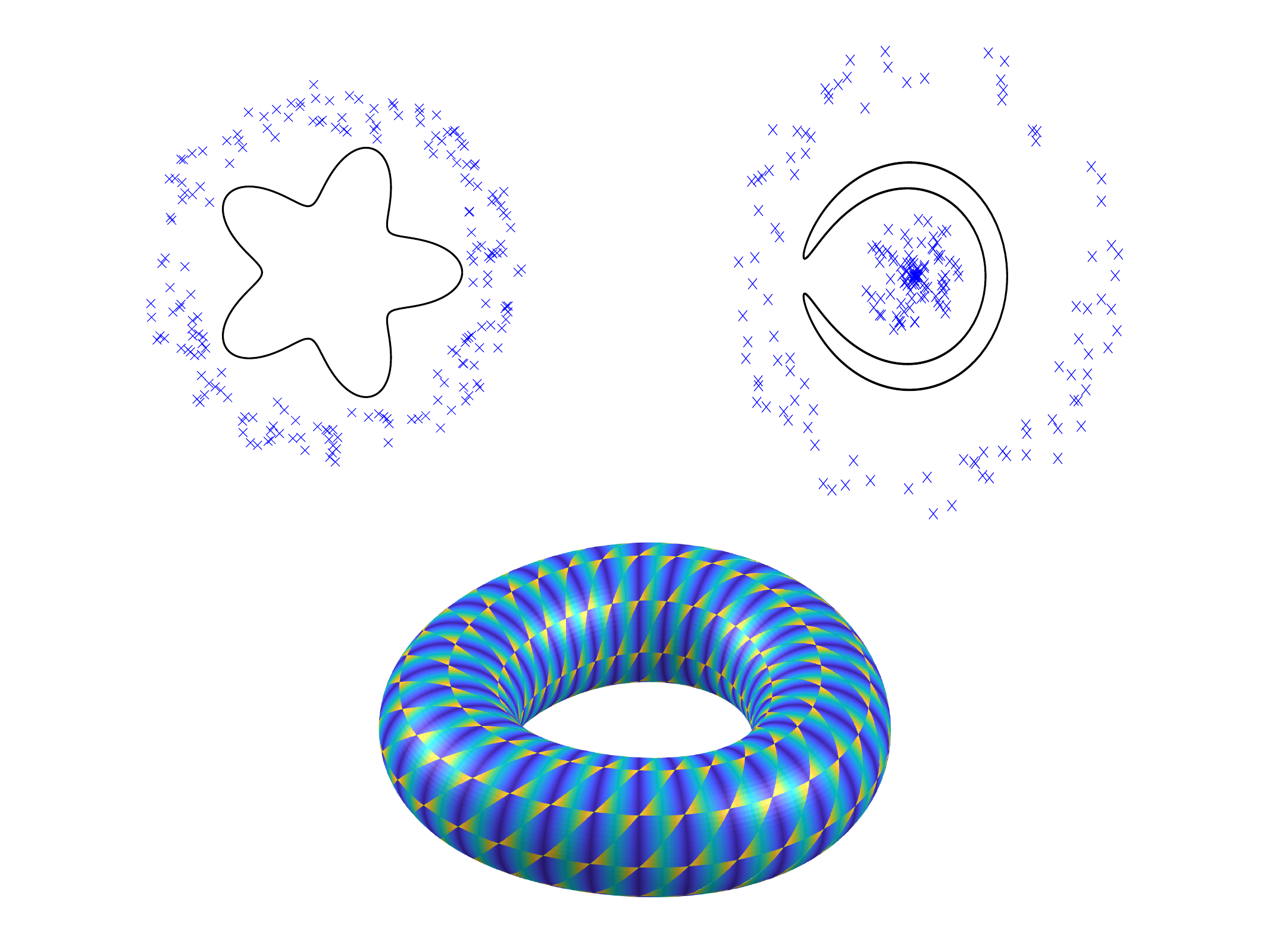}
\caption{Top row: star-shaped domain and cavity domain in two dimensions. The blue crosses indicate the source locations $\y_j$ for generating the exact solution. Bottom row: star-shaped torus domain.}
\label{fig:geom}
\end{center}
\end{figure}

We test the accuracy of the string kernel representation using a known solution in the domain. In two dimensions, suppose $\uex(\x) = \sum_{j=1}^{B} G^{2}(\x,\y_{j}) \sigma_{j}$, where the locations of the $\y_{j}$ are denoted by crosses in Figure~\ref{fig:geom}, the components of $\sigma_{j} \in \mathbb{R}^{2}$ are drawn uniformly in {$[-1,1]$}, $G^{2}(\x,\y)$ is the elastostatic Green's function in two dimensions given by
$$
G^{2}(\x,\y) = \frac{1}{4\pi \mu}\left(-(2-\alpha) \log{|\x - \y|} I_{2} + \alpha\frac{(\x - \y)(\x - \y)^{T}}{|\x- \y|^2}\right)\,,
$$
$\mu = 1$, and $\lambda = 10$. We then represent the solution and solve the integral equation as described in Theorem~\ref{thm:main_res1}. The integral equation is discretized using chunkIE, and the solution is computed at a grid of targets inside the domain denoted by $\x_i$, $i=1,2,\ldots,N_{t}$. For the star-shaped domain, $N_{t} =43410$, and for the cavity $N_{t} = 14764$. Since the solution is only unique up to a rigid body motion, we first find the best $v_{0} \in \mathbb{R}^{2}, \omega \in\mathbb{R}$, which minimize the residual
$$ v_{0}^{\star}, \omega^{\star} = \textrm{argmin}_{v_{0}, \omega} \sum_{i=1}^{N_{t}} \left| \uex(\x_{i}) - u(\x_{i}) - v_{0} - \omega (\x_{i}-\x_{c})^{\perp} \right|^2\,,$$
where $\x_{c}$ is the centroid of the domain given by $\x_{c} = {|\partial \Omega|^{-1}}\int_{\partial \Omega} \x \,{\rm d}S(\x)$, and $\x^{\perp} = (-x_{2}, x_{1})$. Figure~\ref{fig:err2d} plots the relative pointwise residual $r(\x) = \|\uex(\x) - u(\x) - v^{\star}_{0} - \omega^{\star}(\x - \x_c)^{\perp}\|_{\ell^2}/\|\uex(\x)\|_{\ell^2},$ along with the computed solution $u(\x_i)$ for both the domains.

\begin{figure}[!htbp]
\begin{center}
\includegraphics[width=\linewidth]{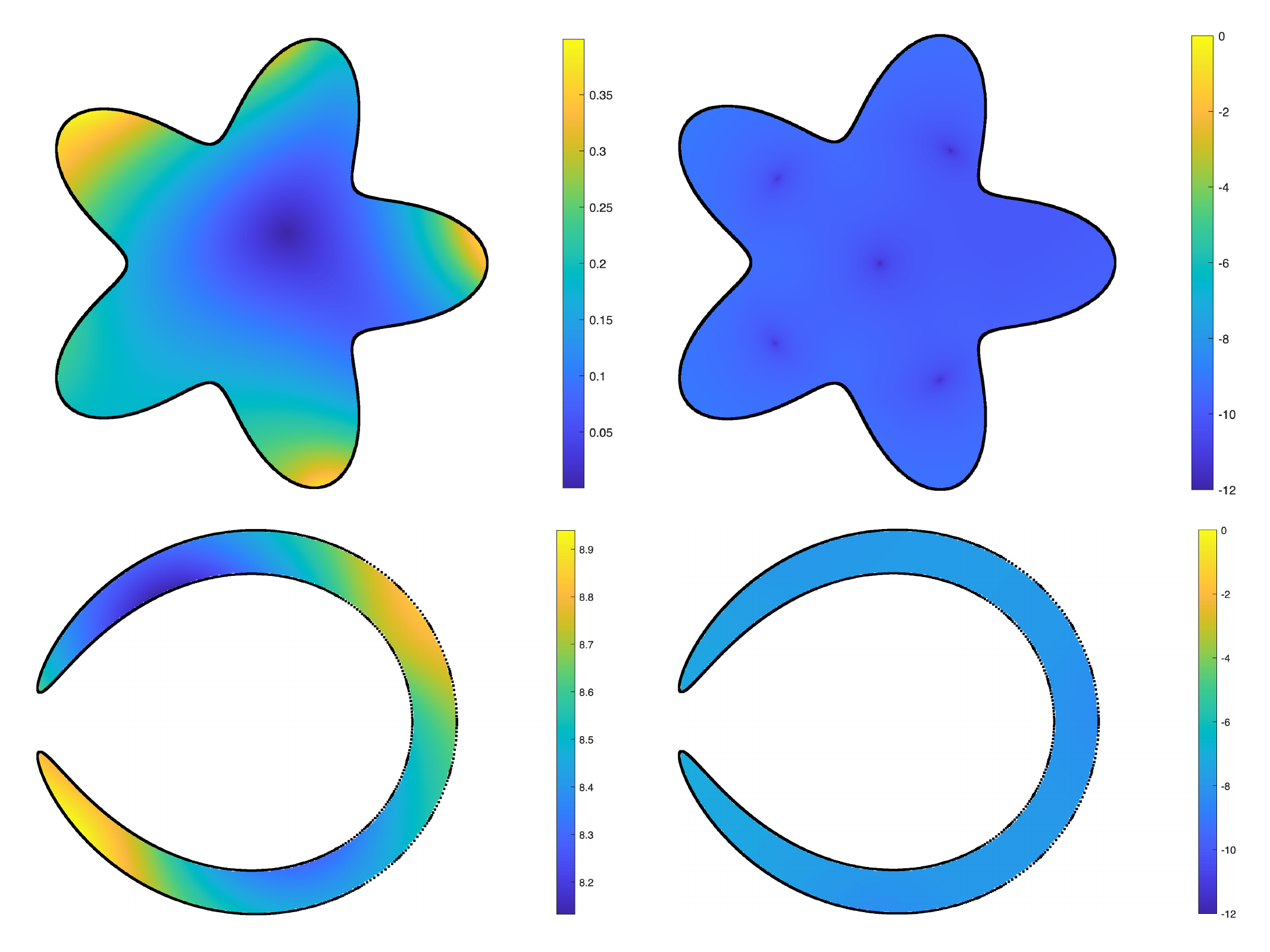}
\caption{Left column: computed solutions $u(\x)$ on the star-shaped and cavity domains. Right column: Relative residual $r(\x)$.}
\label{fig:err2d}
\end{center}
\end{figure}

In three dimensions, suppose $\uex(\x) = G^{3}(\x,\y_{0}) \sigma_{0}$, where $\y_{0} = (3.1,3.19,10.19),$ {$\sigma_{0} = 10^{3} \times (2.2,4.1,0.6)$}, $G^{3}(\x,\y)$ is the elastostatic Green's function in three dimensions given by
$$G^{3}(\x,\y) = \frac{1}{{8}\pi \mu}\left(\frac{(2-\alpha)}{|\x - \y|}I_{3} + \alpha \frac{(\x - \y)(\x - \y)^{T}}{|\x - \y|^3} \right),$$
$\lambda = 1$, and $\mu = 1$. 
Following a procedure similar to the one discussed above, we test the accuracy of our solver in three dimensions. The linear system is formed densely and solved using GMRES until a relative residual of $10^{-10}$ is reached. In Table~\ref{tab:convergence}, we illustrate high-order convergence, when the geometries were discretized using $5$th order patches and $7$th order patches. The reported error is the $\ell^{2}$ norm of the residual $r(\x_i) = \uex(\x_i) - u(\x_i) - v^{\star}_{0} - \omega^{\star} \times (\x_i - \x_c)$ at $200$ random targets $\x_i$ inside the domain, relative to the norm of the boundary data, $\sigma[\uex]$. In all cases, the number of GMRES iterations required is independent of the mesh size, indicating a mesh-independent condition number. In all examples, we eventually observe the expected order of convergence. 
{\begin{remark}
The iteration counts reported in Table~\ref{tab:convergence} are larger than one might
expect for a second-kind integral equation on a smooth domain. Their
independence of the mesh reflects the fact that the operator is a compact
perturbation of the identity; the magnitude of the count is governed instead by
the spectral spread of that compact perturbation, which is bounded independently
of $N$ but is not small for the parameter values used here. The high iteration count can in part be attributed to the choice of $h$, since as $h \to 0$ the representation
degenerates to one with a hypersingular kernel and the equation loses its
second-kind character.
\end{remark}
}

\begin{table}[ht]
\centering
\begin{tabular}{lccccc}
\hline
Geometry & $N_{\textrm{patches}}$ & $n_{\textrm{order}}$ & $n_{\textrm{iter}}$ & $\displaystyle\frac{\|r\|_{\ell^2} }{\|\sigma[\uex]\|_{L^2}}$ & \begin{tabular}{@{}c@{}}Empirical  \\ convergence \\order\end{tabular} \\
\hline
Ellipsoid &  56 & 5 & 211 & $2.7 \times 10^{-2}$ &        \\
Ellipsoid & 224 & 5 & 206 & $2.4 \times 10^{-2}$ & $0.15$ \\
Ellipsoid & 504 & 5 & 190 & $7.7 \times 10^{-4}$ & $8.49$ \\
Ellipsoid & 896 & 5 & 187 & $2.0 \times 10^{-4}$ & $4.66$ \\
\hline
Ellipsoid &  56 & 7 & 220 & $2.5 \times 10^{-2}$ &        \\
Ellipsoid & 224 & 7 & 190 & $1.5 \times 10^{-3}$ & $4.03$ \\
Ellipsoid & 504 & 7 & 187 & $3.9 \times 10^{-5}$ & $9.04$ \\
Ellipsoid & 896 & 7 & 186 & $2.6 \times 10^{-6}$ & $9.39$ \\
\hline
\hline
Star-shaped torus &  128 & 5 & 284 & $3.6 \times 10^{-2}$ &        \\
Star-shaped torus &  512 & 5 & 271 & $2.6 \times 10^{-3}$ & $3.83$ \\
Star-shaped torus & 1152 & 5 & 260 & $5.8 \times 10^{-4}$ & $3.68$ \\
Star-shaped torus & 2048 & 5 & 257 & $1.0 \times 10^{-4}$ & $6.05$ \\
\hline
Star-shaped torus &  128 & 7 & 286 & $4.7 \times 10^{-3}$ &         \\
Star-shaped torus &  512 & 7 & 263 & $2.4 \times 10^{-4}$ &  $4.27$ \\
Star-shaped torus & 1152 & 7 & 257 & $2.5 \times 10^{-5}$ &  $5.62$ \\
Star-shaped torus & 2048 & 7 & 253 & $1.3 \times 10^{-6}$ & $10.16$ \\
\hline
\end{tabular}
\caption{Convergence results for the ellipsoid and star-shaped torus in three dimensions. $N_{\textrm{patches}}$ is the number of patches in the discretization, $n_{\textrm{order}}$ is the order of discretization on each patch, $n_{\textrm{iter}}$ is the number of GMRES iterations required to converge to a relative residual of $10^{-10}$, and $\|r\|_{\ell^2}/\|\sigma[\uex]\|_{L^2(\partial \Omega;\mathbb{R}^d)}$ is a measure of the relative error. Finally, in the last column, we give the implied order of convergence.}
\label{tab:convergence}
\end{table}

Next, we demonstrate the stability of the string kernel integral equation formulation in the Stokes limit. In two dimensions, the condition number is trivially independent of $\lambda$ since the kernel of the integral equation, $\sigma[K^{B,2}]$ does not depend on $\lambda$. In three dimensions, we illustrate the behavior in this limit numerically by computing the condition numbers of the discretized linear systems for an ellipsoid with principal semi-axes $2,1,2$ as a function of $\lambda \in (10^{-5}, 10^{8})$ ($\lambda \to \infty$ corresponds to the Stokes limit), see Figure~\ref{fig:cond}. The discrete linear systems were $\ell^{2}$ scaled to obtain a closer approximation of the underlying physical condition number~\cite{ggq1}. We observe that the condition number for our representations is practically independent of $\lambda$. The ratio of the maximum to minimum condition number over the range of $\lambda$ is $1.63$.

\begin{figure}[!htbp]
\begin{center}
\includegraphics[width=0.7\linewidth]{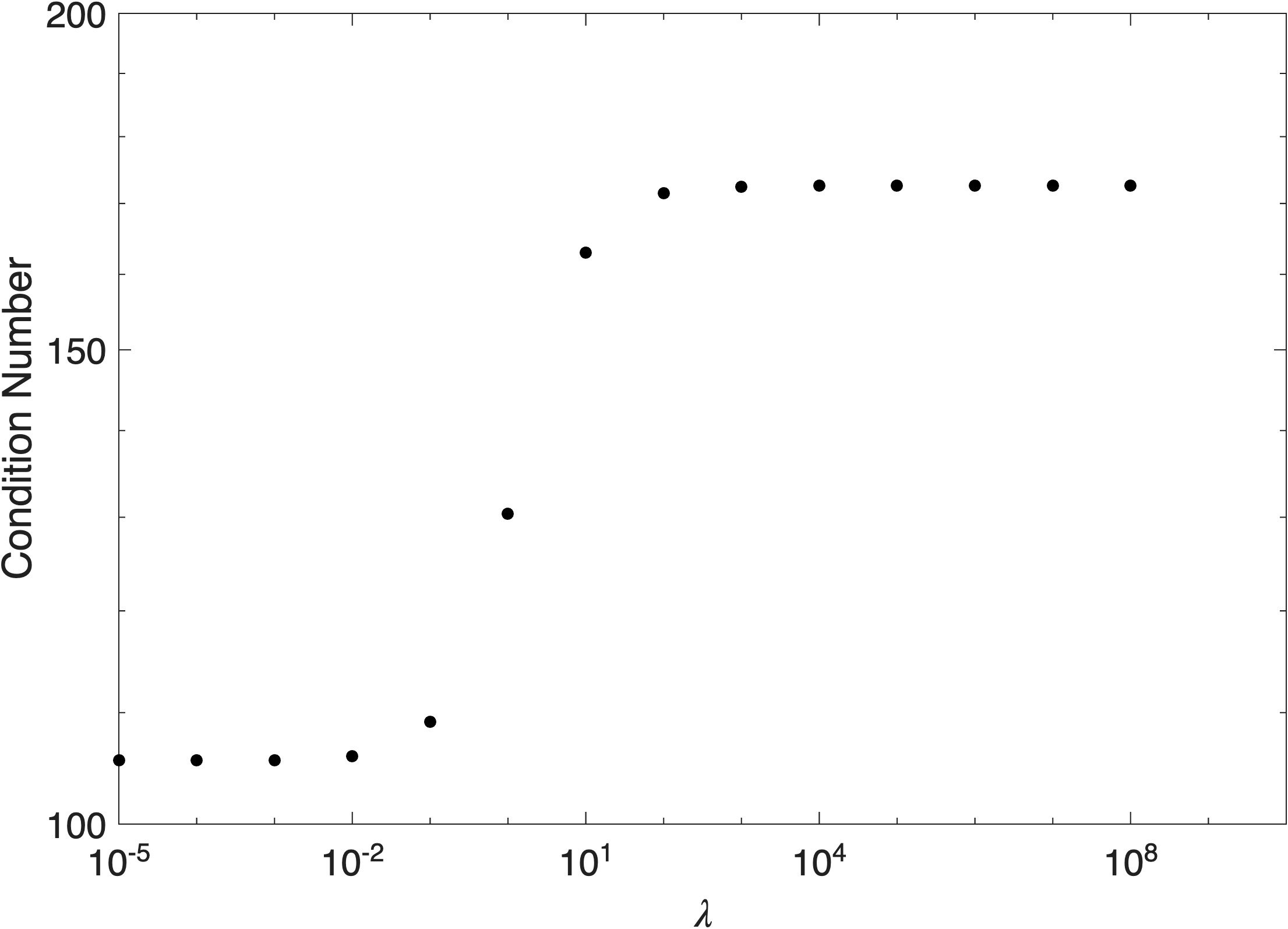}
\caption{Condition number as a function of $\lambda$ for an ellipsoid with principal semi-axes $(2,1,2)$ in three dimensions.}
\label{fig:cond}
\end{center}
\end{figure}

{
\begin{remark}
The Lam\'e parameters enter the integral operators only through $\alpha$, and
$\alpha \to 1^{-}$ both as $\mu \to 0$ with $\lambda$ fixed and as
$\lambda \to \infty$ with $\mu$ fixed. The behavior of the integral equation is
therefore the same in the two limits, though the underlying boundary value
problems are not. When $\lambda \to \infty$ with $\mu$ fixed, the bulk modulus
diverges, $\nabla \cdot u \to 0$, and $u$ itself converges to the velocity field
of a Stokes traction boundary value problem with viscosity $\mu$. When
$\mu \to 0$ with $\lambda$ fixed, the bulk modulus remains bounded and
$\nabla \cdot u$ does not vanish; instead the strain grows like
$\mu^{-1}$, and it is the rescaled displacement $w = \mu u$ that converges to
the solution of a Stokes traction boundary value problem with unit viscosity.
\end{remark}
}

The condition number is sensitive to both the choice of the string length $h$, and the orientation vector $\bv$. In Figure~\ref{fig:cond_h}, we plot the condition number as a function of $h$ on the ellipsoid with principal semi-axes (2,1,2) with $\bv = \n$, and on $\Omegastar$ with $\bv = \n$ and $\bv = \hat{\mathbf{r}}$, where $\hat{\mathbf{r}} = (x_{1}, x_{2})/\sqrt{x_{1}^2 + x_{2}^2}$ is the radial vector. We see that in both cases, the condition number tends to $\infty$ as the string length $h\to 0$. Moreover, radially oriented strings outperform normally oriented strings for star-shaped domains. In either case, the condition number tends to improve as the string length increases. The oscillatory nature of the condition number for normally oriented strings is specific to the star-shaped domain owing to the oscillatory nature of the distance of the closest point to the boundary as $h$ is increased. In both of the examples the same discretization was used for all $h$.

\begin{figure}[!htbp]
\begin{center}
\includegraphics[width=\linewidth]{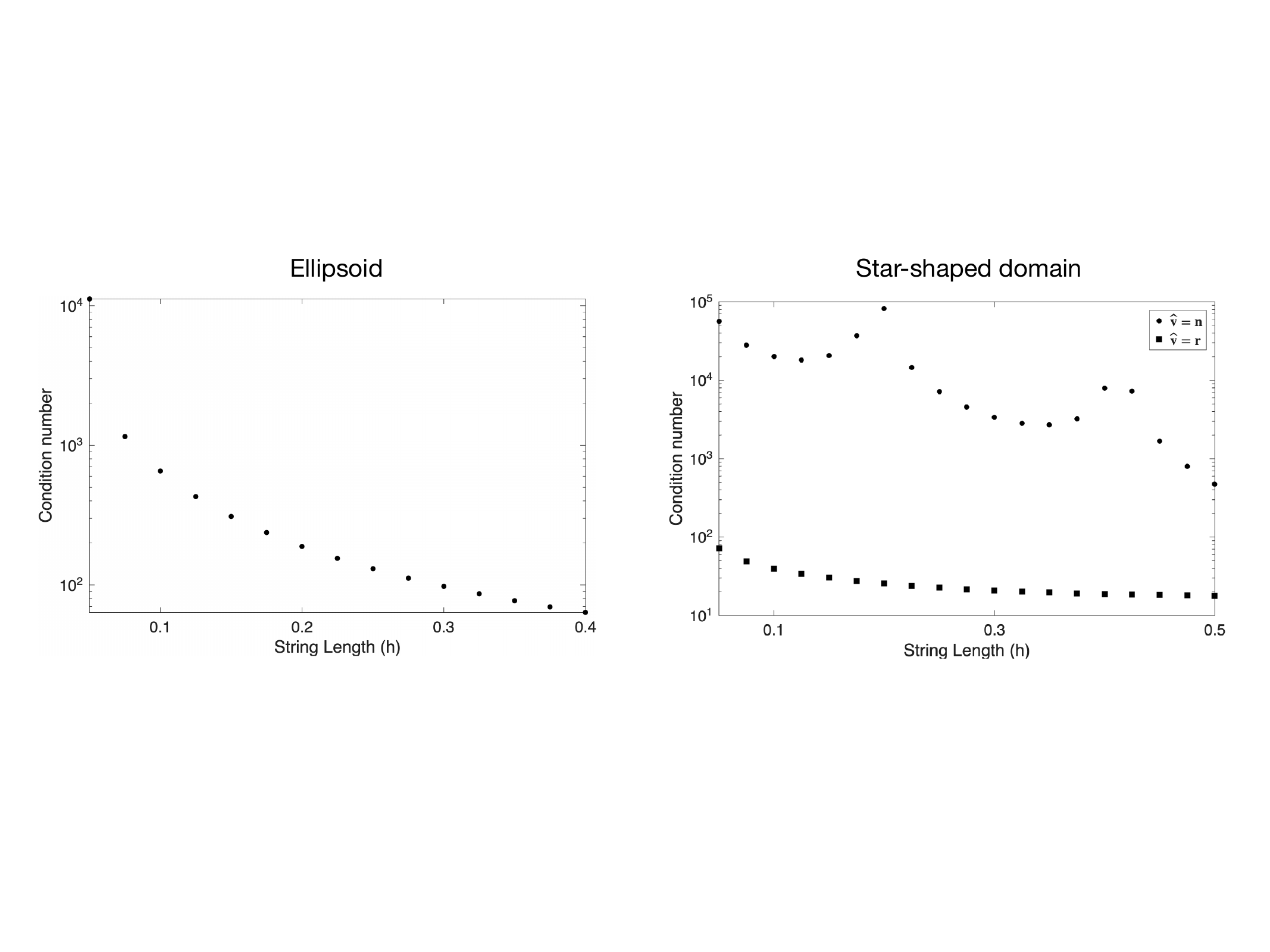}
\caption{(Left) Condition number as a function of $h$ for an ellipsoid with principal semi-axes $(2,1,2)$ in three dimensions, and (right) for the star-shaped domain $\Omegastar$ with $\bv = \n$, and $\bv = \hat{\mathbf{r}}$.}
\label{fig:cond_h}
\end{center}
\end{figure}

Finally, to further illustrate the accuracy of the radial strings on star-shaped domains, we consider a ``random'' star-shaped domain whose parametrization is given by
$$
\gammarandom(t) = \left(1 + 1.5 \sum_{i=1}^{m} \big(a_{i} \cos{(it)} + b_{i} \sin{(it)}\big) \right)\begin{bmatrix}\cos{(t)} \\ \sin{(t)}\end{bmatrix} \,, \quad t\in[0,2\pi) \,,
$$
where $a_{i},b_{i}$ are i.i.d. standard Gaussians, and $m = 25$. Here $\bv = \hat{\mathbf{r}}$ with $h = 1.5$, $\lambda = 10$, and $\mu = 1$. Following an analogous procedure to that used in the prior examples, we generate an artificial solution using $200$ random sources in the exterior and verify that the solution agrees with the exact solution up to a rigid body motion. The solution $u(\x)$ and the pointwise residual $r(\x)$ are plotted in Figure~\ref{fig:bent}.

\begin{figure}[!htbp]
\begin{center}
\includegraphics[width=\linewidth]{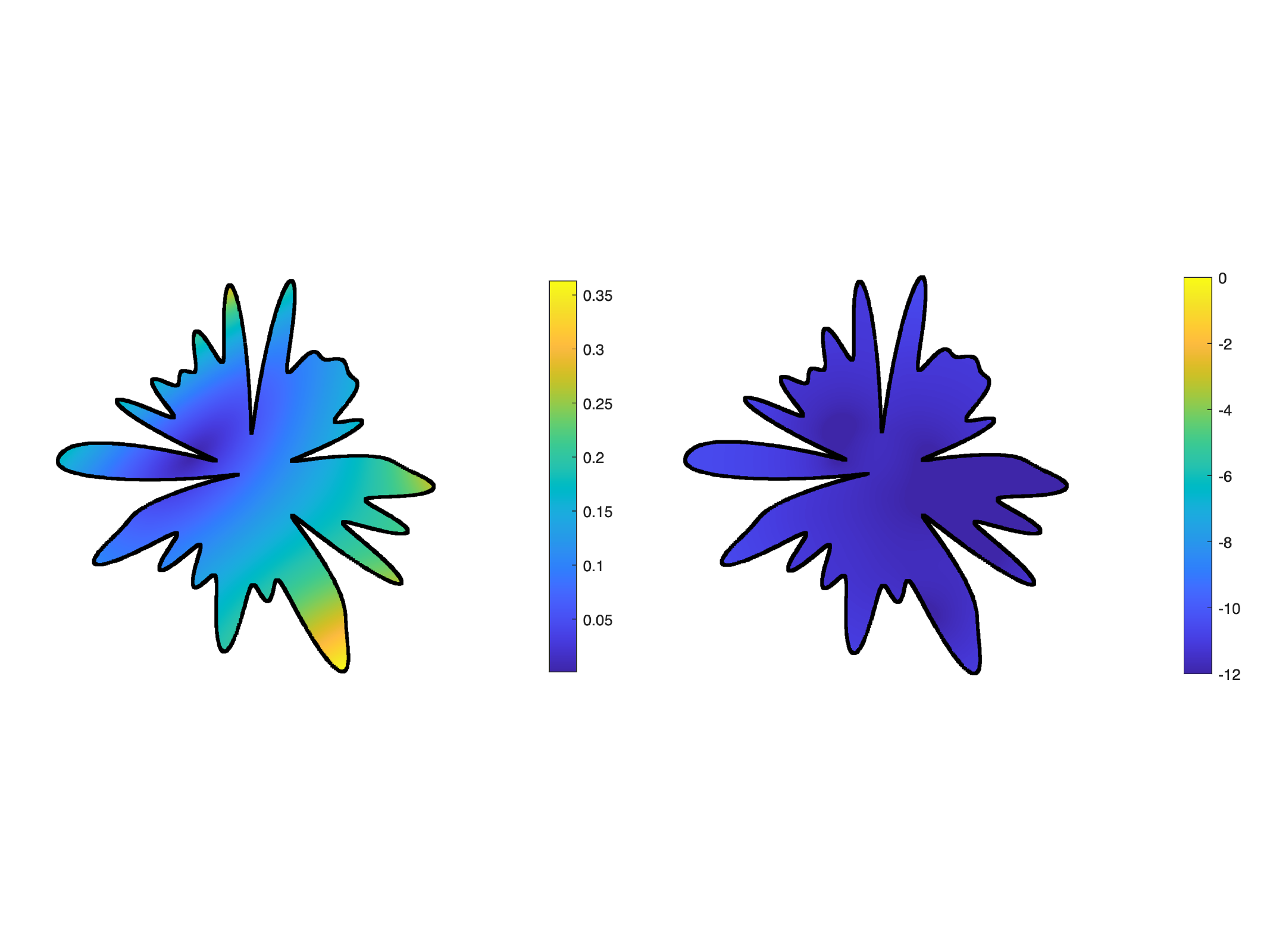}
\caption{(left) computed solution $u(\x)$ on $\Omegarandom$, and (right) relative residual $r(\x)$.}
\label{fig:bent}
\end{center}
\end{figure}

\section{Conclusion}\label{sec:conc}
In this work, we developed new string-based representations for the solution of traction boundary value problems in elastostatics. These representations adapt the Boussinesq-Cerruti solutions which are typically used for the construction of Green's functions in half-spaces to generic exterior and interior boundary value problems. In particular, the singularity in the Boussinesq-Cerruti solutions extends along half-lines in the complementary domain, which we truncate, thereby enabling their use in non-convex domains in the interior and exterior problems. 

The resulting integral equations on the boundary when using these ansatzes are second-kind Fredholm equations. These result in discretized linear systems whose condition number is independent of mesh size. This is illustrated in three dimensions by the constancy of the number of GMRES iterations as the mesh is refined. An additional advantage of integral equation methods is the absence of volumetric locking in the Stokes limit as $\lambda \to \infty$. The integral equation in two dimensions is independent of $\lambda$, and in three dimensions, the condition number varies by less than a factor of $2$ as $\lambda$ is varied over thirteen orders of magnitude.

This string kernel approach can be immediately generalized to several other important applications, including time-harmonic elastodynamics, biharmonic and flexural wave problems~\cite{NEKRASOV2025114355,askham2025surfacelayerslinearizedwater}, acoustic boundary layers~\cite{linden,berggren_acoustic_2018}, and generalized impedance boundary conditions~\cite{Cakoni_2013}. In the above settings, existing integral representations tend to involve singular integral operators (which in many cases are preconditioned using the Hilbert transform to obtain a second-kind integral equation), or involve kernels requiring numerical inversion of Fourier transforms. String kernel representations provide an alternate approach for obtaining second-kind Fredholm equations for these problems, without operator composition.  For scattering problems, string kernel type ansatzes can also be combined with coordinate complexification methods~\cite{epstein2025coordinatecomplexificationhelmholtzequation,HOSKINS2025101721,goodwill2025fastmultipolemethodcomplex} to solve problems with infinite obstacles. All of these directions are being pursued and will be reported at a later date.

\section*{Acknowledgements}
JGH was supported in part by a Sloan Research Fellowship. This research was supported in part by grants from the NSF (DMS-2235451) and Simons Foundation (MPS-NITMB-00005320) to the NSF-Simons National Institute for Theory and Mathematics in Biology (NITMB). AEL acknowledges support from the NSF under award DMS-2052636. MR was supported in part by the Anusandhan National Research Foundation (ANRF/ARG/2025/011989/MS and ANRF/ARGM/2025/002421/MTR). {We would also like to thank the referees whose comments have helped improve the exposition in the manuscript.}

\bibliographystyle{siam}
\bibliography{refs}

\end{document}